\theoremstyle{plain}
\newtheorem{theorem}{Theorem}
\newtheorem{lemma}[theorem]{Lemma}
\newtheorem{proposition}[theorem]{Proposition}
\theoremstyle{definition}
\newtheorem{definition}[theorem]{Definition}
\newtheorem{assumption}[theorem]{Assumption}
\newcommand{\dif}{\mathop{}\!\mathrm{d}}
\newcommand{\cA}{\mathcal{A}}
\newcommand{\mN}{\mathbb{N}}
\title{Moderate deviation principles for stochastic 2D hydrodynamics-type systems with multiplicative jump noise}
\author[1]{Yue Li
}
\author[2]{Shijie Shang}
\author[3]{Jian Wang}
\affil[1]{School of Mathematics and Statistics, Nanjing University of Science and Technology, Nanjing, Jiangsu 210094, China}
\affil[2]{School of Mathematical Sciences, University of Science and Technology of China, Hefei, Anhui 230026, China}
\affil[3]{School of Mathematics, Hangzhou Normal University, Hangzhou, Zhejiang 311121, China}
\date{\today}
\begin{document}
\maketitle

{\renewcommand{\thefootnote}{}
\footnotetext{\textit{Email:} \href{mailto:yueli@njust.edu.cn}{\texttt{yueli@njust.edu.cn}} (Y. Li); \href{mailto:sjshang@ustc.edu.cn}{\texttt{sjshang@ustc.edu.cn}} (S. Shang); \href{mailto:20230078@hznu.edu.cn}{\texttt{20230078@hznu.edu.cn}} (J. Wang).}
}
\vspace{-2em}
\begin{abstract}
This paper establishes a moderate deviation principle for a class of stochastic 2D hydrodynamical-type systems driven by multiplicative jump noise. The proof does not require compactness of the embedding in the associated Gelfand triple, so the result applies to both bounded and unbounded domains. The combination of finite-dimensional projections and integration by parts is used to prove the strong continuity of the skeleton solution map with respect to weakly convergent controls. This approach avoids the time discretization and intricate jump estimates used in earlier treatments of noncompact settings.

\vskip0.3cm
\noindent\textbf{Keywords:} Moderate deviations, stochastic fluid dynamics, finite-dimensional projections, jump noise.\\
\noindent\textbf{AMS Subject Classification (2020):} 60H15, 60F10, 60J76.
\end{abstract}

\section{Introduction}
This paper considers a class of stochastic hydrodynamical-type systems driven by multiplicative jump noise, including stochastic 2D Navier--Stokes equations, 2D magneto-hydrodynamic (MHD) equations, 3D Leray $\alpha$-models and shell models of turbulence, via the following abstract evolution equation:
\begin{equation}\label{Eq}
	\dif u^\varepsilon(t)+\cA u^\varepsilon(t)\dif t+B(u^\varepsilon(t),u^\varepsilon(t))\dif t= \varepsilon\int_{Z}G(t,u^\varepsilon(t-),z)\widetilde{N}^{\varepsilon^{-1}}(\dif z,\dif t),\quad u^\varepsilon(0)=u_0,
\end{equation}
where $\widetilde{N}^{\varepsilon^{-1}}$ is a compensated Poisson random measure with intensity measure $\varepsilon^{-1}\nu_T$. Here, $\nu_T:=\mathrm{Leb}_T\otimes\nu$, $\nu$ is a $\sigma$-finite measure on the locally compact Polish space $Z$, and $\mathrm{Leb}_T$ denotes Lebesgue measure on $[0,T]$. Equation \eqref{Eq} is formulated on a Gelfand triple $V\subseteq H\subseteq V'$, and no compactness of the embedding $V\subseteq H$ is assumed. The precise definitions and assumptions are given in Section 2.  

\vspace{1em}
As $\varepsilon\to 0$, the solution $u^\varepsilon$ of equation \eqref{Eq} is expected to converge to the solution of the deterministic limiting equation
\begin{eqnarray}\label{eq X0}
	\frac{\dif u^0(t)}{\dif t}+\cA u^0(t)+B(u^0(t),u^0(t))=0,\quad u^0(0)=u_0.
\end{eqnarray}
The present paper is concerned with the moderate deviation principles (MDPs) of $u^\varepsilon$ from $u^0$, namely, the asymptotic behavior of the normalized deviation
\[
M^\varepsilon := \frac{u^\varepsilon - u^0}{a(\varepsilon)},
\]
where the deviation scale $a(\varepsilon)$ satisfies
\begin{equation}\label{scale}
	a(\varepsilon)\to 0,\quad \varepsilon/a^2(\varepsilon)\to 0.
\end{equation} 
This scaling regime lies between the central-limit scale $a(\varepsilon)=\sqrt{\varepsilon}$ and the large-deviation scale $a(\varepsilon)=1$. MDPs characterize the exponential decay rate of deviations at this scale, and are of broad interest in areas such as statistical inference.

\vspace{1em}
The compactness of the embedding $V\subseteq H$ is frequently assumed in the variational framework of SPDEs and also plays an essential role in the weak-convergence approach to large and moderate deviations. Existing LDP and MDP results for Lévy-driven hydrodynamical systems \cite{ZZ15,DXZZ17,BPZ23,DS23} rely essentially on this compactness, both to establish tightness of the controlled equations and to prove the strong continuity of the skeleton solution map. However, such compactness generally fails for hydrodynamical equations on unbounded spatial domains, or in more general abstract settings. We establish the MDP for \eqref{Eq} without assuming that $V\subseteq H$ is compact. The result thus covers both compact and noncompact settings and, in particular, includes the bounded-domain result of \cite{DXZZ17} as a special case.

\vspace{1em}
To handle the lack of compactness, we develop a direct argument. Existing methods for noncompact LDPs use time discretization at the cost of either additional assumptions or intricate jump estimates. In the Gaussian case, \cite{CM10} imposes an additional Hölder continuity assumption in time on the diffusion coefficient. For Lévy noise, \cite{WZ24,WZZ25,ZLZ26} require separating small and large jumps and estimating numerous terms; see, for example, (5.8)--(5.10) in \cite{WZZ25}. Our approach instead combines finite-dimensional projections with integration by parts. 

\vspace{1em}
More precisely, to prove the strong continuity of the skeleton solution map, we project a suitably constructed integral term onto a finite-dimensional subspace $H_k\subseteq H$ and use the Arzelà--Ascoli theorem to obtain its convergence to zero for each fixed $k$. Integration by parts then allows us to estimate the relevant terms through the projected integral, while the projection error is controlled by the $L^2$-integrability of $G$; see Proposition~\ref{MDP1}. For the stochastic controlled equations, we avoid tightness arguments and prove directly that the difference between the controlled and skeleton solutions converges to zero in mean square; see Proposition~\ref{MDP2}. 
Moreover, we do not require fourth-moment bounds \cite{BHZ13}, small-jump restrictions \cite{DX09}, or exponential integrability \cite{DS23} for the jump coefficient. Finally, although developed here for \eqref{Eq}, its main idea may also be adapted to other SPDEs without compact embeddings, with jump or Gaussian noise.

\vspace{1em}
For $\varphi\in L^2(\nu_T)$ (the space of square-integrable functions on $[0,T]\times Z$ with respect to $\nu_T$, with norm denoted by $\|\cdot \|_2$), the skeleton equation is given by
\begin{equation}\label{eq sk}
	\frac{\dif Y^\varphi(t)}{\dif t}+\mathcal{A}Y^\varphi(t)+B(Y^\varphi(t),u^0(t))+B(u^0(t),Y^\varphi(t))=\int_Z G(t,u^0(t),z)\varphi(t,z)\nu(\dif z),
\end{equation}
with initial value $Y^\varphi(0)=0$. 

\vspace{1em}
Our main result is the following MDP for the solution $u^\varepsilon$ of equation \eqref{Eq}, equivalently, an LDP for $M^\varepsilon$ with $a(\varepsilon)$ as in \eqref{scale}; see, for example, \cite{DZ98}. The definition of an LDP is recalled in Section 2.
\begin{theorem}\label{Thm MDP}
Suppose that \textbf{Assumptions \ref{p-2} and \ref{con G}} below hold and that $u_0\in H$. Then the family $\left\{M^\varepsilon=\frac{u^\varepsilon-u^0}{a(\varepsilon)}\right\}_{\varepsilon>0}$ satisfies an LDP in $D([0,T];H)\cap L^2([0,T];V)$ as $\varepsilon\to 0$, with speed $\varepsilon/a^2(\varepsilon)$ and rate function
	$$
	I(\eta)=\inf_{\varphi\in L^2(\nu_T):\;\eta=Y^\varphi}\Big\{\frac{1}{2}\|\varphi\|^2_2\Big\},  \ \ \ \ \ \forall\eta\in D([0,T];H)\cap L^2([0,T];V),
	$$
	with the convention $\inf\emptyset=\infty$. Here $D([0,T];H)$ denotes the space of $H$-valued càdlàg functions endowed with the Skorokhod topology.
\end{theorem}

The remainder of the paper is organized as follows. Section 2 presents the preliminaries, assumptions and some well-posedness results. Section 3 is devoted to the proof of Theorem \ref{Thm MDP}, with the finite-dimensional projection and integration-by-parts arguments presented in Subsection 3.1.

Throughout the paper, $c$ and $C$ denote generic positive constants whose values may change from line to line.

\section{Preliminaries and hypotheses}\label{S:2}
This section collects the preliminaries, the assumptions on the coefficients $B$ and $G$, and the well-posedness results needed in the proof.

\subsection{Preliminaries}
\noindent\textbf{Functional setting.} Let $H$ be a separable Hilbert space with inner product $\langle\cdot,\cdot\rangle$ and norm $|\cdot|$. The operator$\cA$ is a (possibly unbounded) self-adjoint positive definite linear operator on $H$. Set $V:=\rm{dom} (\cA^{1/2})$  and equip it with the norm $\|x\|:=|\cA^{\frac{1}{2}}x|,\ x\in V.$
Denote by $V'$ the dual of $V$. Identify $H$ with $H'$, we obtain the Gelfand triple $$V\subseteq H\subseteq V'.$$
The duality between $V'$ and $V$ is also denoted by $\langle \cdot,\cdot\rangle$.

\vspace{1em}
Recall from Section 1 that $Z$ is a locally compact Polish space equipped with a $\sigma$-finite measure $\nu$ and that $\nu_T=\text{Leb}_T\otimes \nu$. We denote by $L^2(\nu_T)$ the associated Hilbert space of square-integrable, real-valued functions on $[0,T]\times Z$, with norm $\|h\|_2$ defined by 
\[
\|h\|_2^2 := \int_0^T \int_Z |h(t,z)|^2 \, \nu(\mathrm{d}z) \mathrm{d}t < \infty, 
\]
The closed ball in $L^2(\nu_T)$ of radius $r > 0$ is denoted by $B_2(r)$, equipped with the weak topology (in which it is compact).

\vspace{1em}
\noindent\textbf{Controlled Poisson random measures.} Recall that $\widetilde{N}^{\varepsilon^{-1}}$ is the compensated Poisson random measure on $[0,T]\times Z$ with intensity measure $\varepsilon^{-1} \nu_T$, as introduced in Section 1. 
To show the MDP, specially condition (C2) in Section 3, we replace the fixed intensity $\varepsilon^{-1}$ by a predictable intensity $\varphi$. This leads to the controlled Poisson random measures $N^\varphi$ and their compensated measures $\widetilde{N}^\varphi$. 
We recall their standard construction on an extended space; see, for example, \cite{BDG16}.

Let $$\mathbb{M} := M_{FC}([0,T]\times Z \times [0,\infty))$$ be the space of Borel measures on $[0,T]\times Z \times [0,\infty)$ that are finite on compact sets, endowed with the usual vague topology, which makes $\mathbb{M}$ a Polish space. 
Denote by $\mathbb{P}$ the unique probability measure on $(\mathbb{M}, \mathcal{B}(\mathbb{M}))$ under which the canonical mapping $$\bar{N}:\mathbb{M}\rightarrow \mathbb{M},\qquad \bar{N}(\omega):=\omega,$$ is a Poisson random measure with intensity measure $\text{Leb}_T\otimes\nu\otimes\text{Leb}_\infty $, where $\text{Leb}_\infty $ denotes the Lebesgue measure on $[0,\infty)$. The corresponding compensated Poisson random measure is denoted by $\widetilde{\bar{N}}$. 
Let $\mathbb{F}:=\{\mathcal{F}_t\}_{t\in[0,T]}$ be the augmentation of the filtration generated by $\bar{N}$ under $\mathbb{P}$. 
This gives the filtered probability space $(\mathbb{M},\mathcal{B}(\mathbb{M}),\mathbb{F},\mathbb{P})$.

Denote by $\mathcal{P}$ the predictable $\sigma$-field on $[0,T]\times\mathbb{M}$ with respect to the filtration $\mathbb{F}$. A mapping on $[0,T]\times Z\times\mathbb{M}$ is called predictable if it is measurable with respect to $\mathcal{B}(Z)\otimes\mathcal{P}$. 
For a predictable mapping $\varphi:[0,T]\times Z\times\mathbb{M}\to [0,\infty)$, define the random measure $N^\varphi$ on $[0,T]\times Z$ by
\begin{equation}\label{Jump-representation}
	N^\varphi((0,t]\times A):=\int_{(0,t]\times A\times[0,\infty)}
	\mathbf{1}_{[0,\varphi(s,z)]}(r)\,\bar{N}(\mathrm{d}s,\mathrm{d}z,\mathrm{d}r),
	\qquad 0\le t\le T,\; A\in\mathcal{B}(Z).
\end{equation}
The compensator of $N^\varphi$ is $\varphi(s,z)\nu(\mathrm{d}z)\mathrm{d}s$, so $\varphi$ controls its intensity. 
Analogously, $\widetilde{N}^\varphi$ is defined by replacing $\bar{N}$ with $\widetilde{\bar{N}}$ in \eqref{Jump-representation}. When $\varphi(s,z,\omega)\equiv\varepsilon^{-1}$, we write $N^\varphi=N^{\varepsilon^{-1}}$ and $\widetilde{N}^\varphi=\widetilde{N}^{\varepsilon^{-1}}$.

Under $\mathbb{P}$, $N^{\varepsilon^{-1}}$ is a Poisson random measure on $[0,T]\times Z$ with intensity measure $\varepsilon^{-1}\nu_T$, and $\widetilde{N}^{\varepsilon^{-1}}$ is its compensated measure.  Thus the compensated Poisson random measure in equation \eqref{Eq} is realized on the filtered probability space constructed above.

\vspace{1em}
\noindent\textbf{Large deviation principle.} For the readers' convenience, we recall the definition of an LDP for a family of random elements $\left\{X^{\varepsilon}\right\}_{\varepsilon>0}$ taking values in  a Polish space $\mathcal{E}$.

\begin{definition}[Large deviation principle] Let $I: \mathcal{E} \rightarrow[0, \infty]$ be a good rate function on $\mathcal{E}$, that is, for each $M<\infty$, the level set $\{x \in \mathcal{E}: I(x) \leq M\}$ is compact in $\mathcal{E}$. 
Given positive numbers $\{\hbar(\varepsilon)\}_{\varepsilon>0}$ such that $\hbar(\varepsilon)\to0$,
a family $\left\{X^{\varepsilon}\right\}_{\varepsilon>0}$ of $\mathcal{E}$-valued random elements is said to satisfy an LDP on $\mathcal{E}$ with speed $\hbar(\varepsilon)$ and rate function $I$ if the following two bounds hold.
\begin{itemize}
  \item[(a)]
 (Upper bound) For each closed subset $F$ of $\mathcal{E},$
$$
\limsup_{\varepsilon \rightarrow 0} \hbar(\varepsilon) \log {\mathbb{P}}\left(X^{\varepsilon} \in F\right) \leq - \inf_{x \in F} I(x).
$$
 \item[(b)] (Lower bound) For each open subset $O$ of $\mathcal{E}$,
$$
\liminf_{\varepsilon \rightarrow 0} \hbar(\varepsilon) \log {\mathbb{P}}\left(X^{\varepsilon} \in O\right) \geq - \inf_{x \in O} I(x).
$$
\end{itemize}
\end{definition}

\subsection{Hypotheses}
Our assumptions on the coefficient $B$ are as follows. 
\begin{assumption}\label{p-2}
Assume that the mapping $B:V\times V\rightarrow V'$ is a continuous bilinear mapping satisfying the following conditions:
\begin{itemize}
\item[(B1)](skew-symmetry)
\begin{eqnarray}\label{p-3}
  \langle B(u_1,u_2),u_3\rangle=-\langle B(u_1,u_3),u_2\rangle, \text{ for all } u_1,u_2,u_3\in V,
\end{eqnarray}
which implies that \begin{eqnarray}\label{eq PB}
	\langle B(u_1,u_2),u_2\rangle=0, \text{ for all } u_1,u_2\in V.
\end{eqnarray}

\item[(B2)] There exist a reflexive, separable Banach space $(Q, |\cdot|_Q)$ and a constant $a_0>0$ such that
    \begin{eqnarray}
      && V\subseteq Q \subseteq H, \nonumber\\
      \label{p-4} && |v|_Q^2\leq a_0|v| \|v\|, \text{  for all } v\in V.
    \end{eqnarray}
\item[(B3)] There exists a constant $C>0$ such that
\begin{eqnarray}
\label{p-5}
  |\langle B(u,v), w\rangle|\leq C|u|_Q \|v\| |w|_Q,
  \text{ for all } u,v,w\in V.
\end{eqnarray}
\end{itemize}
\end{assumption}

\noindent\textbf{Remark.}
The hypotheses imposed on $\cA$ and $B$ are satisfied by a class of 2D hydrodynamic-type models, including the 2D Navier--Stokes and 2D magneto-hydrodynamic (MHD) equations. They also cover some regularized higher-dimensional models, such as the 3D Leray $\alpha$-model, and certain shell models of turbulence. We refer to \cite[Section 2]{CM10} for the corresponding verifications.

\vspace{1em}
To formulate the assumptions on $G$, we introduce the auxiliary function class $\mathcal{H}^\rho$. For $\rho>0$, define 
\begin{align*}
\mathcal{H}^\rho :=
\Big\{
                 h:[0,T]\times Z\rightarrow\mathbb{R}_+ \;\big|\;\int_0^T\int_Z e^{\rho h(t,z)}\mathbf{1}_{U}(t,z)\nu(dz)dt<\infty \\
                 \text{ for any }
                 U\in \mathcal{B}([0,T])\otimes\mathcal{B}(Z) \text{ with } \nu_T(U) <\infty 
                 \Big\} .
\end{align*}
\begin{assumption}\label{con G}
Let $G:[0,T]\times H\times Z\rightarrow H$ be a measurable mapping.  There exist a constant $\rho>0$ and functions $L_i\in \mathcal{H}^\rho\cap L^2(\nu_T),\ i=1,2,3$, such that, for all $t\in[0,T],\ v, v_1, v_2\in H$, and $\nu$-a.e. $z\in Z$,
\begin{itemize}
\item[(H1)](Lipschitz) $$
             |G(t,v_1,z)-G(t,v_2,z)|\leq L_1(t,z)|v_1-v_2|;
            $$
\item[(H2)](Growth) $$
             |G(t,v,z)|\leq L_2(t,z)+L_3(t,z)|v|.
            $$
\end{itemize}
\end{assumption}

\subsection{Well-posedness results}

We next collect the well-posedness results for the stochastic equation \eqref{Eq}, the limiting deterministic equation \eqref{eq X0}, and the skeleton equation \eqref{eq sk}.
We first define a solution of equation \eqref{Eq}.
%

\begin{definition}
An $H$-valued c\`adl\`ag $\mathbb{F}$-adapted process $u^\varepsilon=(u^\varepsilon(t))_{t\in[0,T]}$ is called a solution of equation \ \eqref{Eq} if the following conditions hold:
\begin{enumerate}
    \item[(S1)] $u^\varepsilon\in D([0,T];H)\cap L^2([0,T];V)$, $\mathbb{P}$-a.s.;
    \item[(S2)] For every $t\in[0,T]$, the following equality holds in $V'$, $\mathbb{P}$-a.s.:
    \begin{equation*}
        u^\varepsilon(t)=u_0-\int_0^t\mathcal{A}u^\varepsilon(s)\,\mathrm{d}s
        -\int_0^t B(u^\varepsilon(s),u^\varepsilon(s))\,\mathrm{d}s
        +\varepsilon\int_0^t\int_Z G(s,u^\varepsilon(s-),z)\,\widetilde{N}^{\varepsilon^{-1}}(\mathrm{d}z,\mathrm{d}s).
    \end{equation*}
\end{enumerate}
\end{definition}

\begin{proposition}\phantomsection\label{thm solution}
\begin{enumerate}
    \item[(i)] 
    Suppose that Assumptions \ref{p-2} and \ref{con G} hold and that $u_0\in H$. Then there exists $\varepsilon_0\in(0,1)$ such that for every $\varepsilon\in(0,\varepsilon_0]$, equation \eqref{Eq} has a unique solution $u^\varepsilon=(u^\varepsilon(t))_{t\in[0,T]}$. Moreover,
    \begin{equation*}
        \sup_{t\in[0,T]}\mathbb{E}\left[|u^\varepsilon(t)|^2\right]
        +\mathbb{E}\left[\int_0^T \|u^\varepsilon(t)\|^2\,\mathrm{d}t\right]<\infty.
    \end{equation*}

    \item[(ii)] 
    Suppose that Assumption \ref{p-2} holds and that $u_0\in H$. Then equation \eqref{eq X0} has a unique solution $u^0\in C([0,T];H)\cap L^2([0,T];V)$. Moreover,
    \begin{equation}\label{esti X0}
        \sup_{t\in[0,T]}|u^0(t)|^2+2\int_0^T \|u^0(t)\|^2\,\mathrm{d}t\le |u_0|^2.
    \end{equation}

    \item[(iii)] 
    Suppose that Assumptions \ref{p-2} and \ref{con G} hold and that $u_0\in H$. Then for every $\varphi\in L^2(\nu_T)$, equation \eqref{eq sk} has a unique solution $Y^\varphi\in C([0,T];H)\cap L^2([0,T];V)$. Moreover,
    \begin{equation*}
        \sup_{t\in[0,T]}|Y^\varphi(t)|^2+\int_0^T \|Y^\varphi(t)\|^2\,\mathrm{d}t
        \le c(\|\varphi\|_2),
    \end{equation*}
    where $c(\|\varphi\|_2)$ is a constant depending on $\|\varphi\|_2$.
\end{enumerate}
\end{proposition}

\begin{proof}
Assertions (i) and (ii) follow respectively from \cite[Theorem 2.4]{PYZ22} and 
\cite[Chapter III, Theorems 3.1 and 3.2, Remark 3.2]{Te79}. We prove (iii) briefly below.
By (H2), 
$$
\int_0^T\int_Z|G(t,u^0(t),z)|^2\nu(\dif z)\dif t \leq C,
$$
so the Cauchy--Schwarz inequality gives $G(\cdot,u^0(\cdot),\cdot)\varphi(\cdot,\cdot)\in L^1([0,T];H)$:
\begin{equation*}
    \int_0^t \int_Z |G(t,u^0(t),z)\varphi(t,z)|\nu(\dif z)\dif t\leq \left(\int_0^T\int_Z|G(t,u^0(t),z)|^2\nu(\dif z)\dif t\right)^{\frac{1}{2}}\|\varphi\|_2<\infty.
\end{equation*}
By \cite[Chapter III, Remark 3.1]{Te79}, the skeleton equation \eqref{eq sk} admits a unique
solution $Y^\varphi\in C([0,T];H)\cap L^2([0,T];V)$.
The bound follows from the standard energy estimate: 
testing the equation against $Y^{\varphi}$ and 
using (B2)--(B3) and Gronwall's inequality yield the desired estimate.
\end{proof}

\section{Proof of Theorem \ref{Thm MDP}}\label{S:3}
In this section, we prove Theorem \ref{Thm MDP} by the weak-convergence method. The key step is to
establish the strong continuity of the skeleton solution map with respect to weakly convergent controls without assuming that the embedding $V\subseteq H$ is compact. This is achieved
in Subsection 3.1 by using an approach based on finite-dimensional projections and integration by parts. 
In Subsection 3.2, we prove directly that the difference between the stochastic controlled equation and the associated skeleton equation converges to $0$.

We begin by recalling the weak-convergence framework and introduce the spaces of intensity controls used below.

For a measurable function $g:[0,T]\times Z\to[0,\infty)$, define
\[
Q(g):=\int_{[0,T]\times Z}\ell(g(s,z))\,\nu(\mathrm{d}z)\mathrm{d}s,
\qquad \ell(x):=x\log x-x+1,\ \ell(0):=1.
\]
For $m>0$ and $\varepsilon>0$, define
\begin{align*}
	S^m_{+,\varepsilon} &:=\left\{g:[0,T]\times Z\rightarrow[0,\infty)\;|\;\ Q(g)\leq ma^2(\varepsilon)\right\},\\
	S^m_\varepsilon &:=\left\{\varphi:[0,T]\times Z\rightarrow \mathbb{R}\;|\;\varphi=(g-1)/a(\varepsilon),\ g\in S^m_{+,\varepsilon}\right\}.
\end{align*}
Predictability is understood with respect to $\mathcal{B}(Z)\otimes\mathcal{P}$ (see Section 2).
\begin{align*}
\mathcal{S}^m_{+,\varepsilon}
&:= \left\{g:[0,T]\times Z\times\mathbb{M}\to[0,\infty) \text{ is predictable} \;|\; g(\cdot,\cdot,\omega)\in S^m_{+,\varepsilon}, \text{ for } \mathbb{P}\text{-a.e. }\omega \right\},\\
\mathcal{S}^m_\varepsilon
&:= \left\{\varphi:[0,T]\times Z\times\mathbb{M}\to\mathbb{R}\text{ is predictable} \;|\;  \varphi(\cdot,\cdot,\omega)\in S^m_\varepsilon,\text{ for } \mathbb{P}\text{-a.e. }\omega \right\}.
\end{align*}

Suppose $g\in S^m_{+,\varepsilon}$. By Lemma 3.2 in \cite{BDG16}, there exists a constant $\kappa_2(1)>0$ (independent of $\varepsilon$) such that
$\varphi\mathbf{1}_{\{|\varphi|\leq1/a(\varepsilon)\}}\in B_2(\sqrt{m\kappa_2(1)})$, where $\varphi=(g-1)/a(\varepsilon)$.

Let $M_{FC}([0,T]\times Z)$ denote the space of Borel measures on $[0,T]\times Z$ that are finite on compact sets; recall from Section 2.1 that $N^{\varepsilon^{-1}}$ takes values in this space.

\vspace{1em}
We use the following sufficient condition established in \cite{LSZZ23} for the MDP of $X^\varepsilon$. It is modification of the classical Budhiraja--Dupuis--Ganguly criterion in \cite{BDG16} and is convenient for our setting. 
\begin{lemma}[{\cite[Theorem 3.6]{LSZZ23}}]\label{lem}
Let $\mathcal{E}:=D([0,T];H)\cap L^2([0,T];V)$. Suppose there exist measurable mappings
\[
\Gamma^0: L^2(\nu_T)\to \mathcal{E}, \qquad 
\Gamma^\varepsilon: M_{FC}([0,T]\times Z)\to \mathcal{E},
\]
such that
\[
X^\varepsilon := \Gamma^\varepsilon(\varepsilon N^{\varepsilon^{-1}})
\]
for $\varepsilon>0$, where $X^\varepsilon$ is the family of random elements for which the MDP is to be established.

Assume that the following two conditions hold:

\noindent\textbf{(C1)} For any $m>0$, if $\{\varphi_n\}\subseteq B_2(m)$ and $\varphi_n\to\varphi$ weakly in $L^2(\nu_T)$ as $n\to\infty$, then
\[
\Gamma^0(\varphi_n)\rightarrow \Gamma^0(\varphi)
\quad\text{in }\ \mathcal{E}.
\]

\noindent\textbf{(C2)} For any $m>0$, let $\{\psi_\varepsilon\}_{\varepsilon>0}$ satisfy $\psi_\varepsilon\in\mathcal{S}_{+,\varepsilon}^m$ for every $\varepsilon>0$, and set
\[
\varphi_\varepsilon := \frac{\psi_\varepsilon-1}{a(\varepsilon)},\qquad
\varphi_\varepsilon \mathbf{1}_{\{|\varphi_\varepsilon|\le \beta/a(\varepsilon)\}}\in B_2(\sqrt{m\kappa_2(1)})\quad\text{for some }\beta\in(0,1]. 
\] 
Then
\[
\Gamma^\varepsilon(\varepsilon N^{\varepsilon^{-1}\psi_\varepsilon})
-
\Gamma^0\!\left(\varphi_\varepsilon \mathbf{1}_{\{|\varphi_\varepsilon|\le \beta/a(\varepsilon)\}}\right)
\rightarrow 0\quad \text{ in }\ \mathcal{E}\text{ in probability, as }\varepsilon\to0.
\]
Then the family $\{X^\varepsilon\}$ satisfies a LDP in $\mathcal{E}$ with speed $\varepsilon/a^2(\varepsilon)$ and rate function
\[
I(\eta):=\inf_{\varphi\in L^2(\nu_T):\, \eta=\Gamma^0(\varphi)}
\left\{\frac{1}{2}\|\varphi\|_2^2\right\},
\]
with the convention $\inf\emptyset=\infty$.
\end{lemma}

\vspace{1em}
In our setting, the abstract mappings in Lemma \ref{lem} are realized as follows.

First, Proposition \ref{thm solution}(iii) gives the well-posedness of the skeleton equation \eqref{eq sk}, which allows us to define
\[
\Gamma^0:L^2(\nu_T)\to C([0,T];H)\cap L^2([0,T];V),\qquad
\Gamma^0(\varphi):=Y^\varphi,
\]
where $Y^\varphi$ is the unique solution of \eqref{eq sk}.

Second, the deviation process $M^\varepsilon:=(u^\varepsilon-u^0)/a(\varepsilon)$ satisfies
\begin{equation*}
\begin{aligned}
\mathrm{d}M^\varepsilon(t)
&+\mathcal{A}M^\varepsilon(t)\,\mathrm{d}t
+\frac{1}{a(\varepsilon)}
\Big(B(a(\varepsilon)M^\varepsilon(t)+u^0(t),a(\varepsilon)M^\varepsilon(t)+u^0(t))-B(u^0(t),u^0(t))\Big)\mathrm{d}t\\
&=\frac{\varepsilon}{a(\varepsilon)}\int_Z G(t,a(\varepsilon)M^\varepsilon(t-)+u^0(t-),z)\,
\widetilde{N}^{\varepsilon^{-1}}(\mathrm{d}z,\mathrm{d}t),\quad M^\varepsilon(0)=0.
\end{aligned}
\end{equation*}
By Proposition \ref{thm solution} (i)--(ii) and Yamada-Watanabe Theorem (\cite[Theorem 8]{Z14}), there exists a measurable mapping $\Gamma^\varepsilon:M_{FC}([0,T]\times Z)\to D([0,T];H)\cap L^2([0,T];V)$ such that
\[
M^\varepsilon=\Gamma^\varepsilon(\varepsilon N^{\varepsilon^{-1}}).
\]

For $\psi_\varepsilon\in\mathcal{S}^m_{+,\varepsilon}$, define the controlled process
\[
M^{\psi_\varepsilon}:=\Gamma^\varepsilon(\varepsilon N^{\varepsilon^{-1}\psi_\varepsilon}).
\]
By Girsanov's theorem (see, e.g., \cite[Theorem 6.1]{BPZ23}), $M^{\psi_\varepsilon}$ satisfies the controlled SPDE
\begin{equation}\label{eq MDP 1-second}
\begin{aligned}
\mathrm{d}M^{\psi_\varepsilon}(t)
&+\mathcal{A}M^{\psi_\varepsilon}(t)\,\mathrm{d}t
= -\frac{1}{a(\varepsilon)}
\Big(B(a(\varepsilon)M^{\psi_\varepsilon}(t)+u^0(t),a(\varepsilon)M^{\psi_\varepsilon}(t)+u^0(t))-B(u^0(t),u^0(t))\Big)\mathrm{d}t\\
&+\frac{\varepsilon}{a(\varepsilon)}\int_Z G(t,a(\varepsilon)M^{\psi_\varepsilon}(t-)+u^0(t-),z)\,\widetilde{N}^{\varepsilon^{-1}\psi_\varepsilon}(\mathrm{d}z,\mathrm{d}t)\\
&+\frac{1}{a(\varepsilon)}\int_Z G(t,a(\varepsilon)M^{\psi_\varepsilon}(t)+u^0(t),z)
\Big(\psi_\varepsilon(t,z)-1\Big)\nu(\mathrm{d}z)\,\mathrm{d}t,\quad M^{\psi_\varepsilon}(0)=0.
\end{aligned}
\end{equation}
Here, the last term arises from the change of compensator from $\varepsilon^{-1}\nu_T$ to $\varepsilon^{-1}\psi_\varepsilon\nu_T$.

\vspace{1em}
Therefore, by Lemma \ref{lem}, Theorem \ref{Thm MDP} follows once we verify:
\begin{itemize}
    \item (C1): the strong continuity of the skeleton solution map with respect to weakly convergent controls; see Section 3.1.
    \item (C2): the convergence of the stochastic controlled equation to the associated skeleton equation; see Section 3.2.
\end{itemize}

\subsection{Verification of (C1)}\label{S:4}
In this subsection, we prove the strong continuity of the skeleton solution map with respect to weakly convergent controls. To avoid a compactness assumption on the Gelfand triple, we use an approach based on finite-dimensional projections and integration by parts.
\begin{proposition}\label{MDP1}
	For any $m>0$, let $\varphi_n, \varphi\in B_2(m)$ with $\varphi_n\rightarrow\varphi$ weakly in $L^2(\nu_T)$ as $n\rightarrow\infty$. Then
	$$
	\lim_{n\rightarrow\infty}Y^{\varphi_n}=Y^{\varphi}\quad \text{ in } D([0,T];H)\cap L^2([0,T];V).
	$$
	Here $Y^{\varphi}$ is the unique solution of equation \eqref{eq sk}, and $Y^{\varphi_n}$ is the unique solution of equation \eqref{eq sk} with $\varphi$ replaced by $\varphi_n$.
\end{proposition}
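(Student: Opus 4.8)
The plan is to prove the convergence $Y^{\varphi_n}\to Y^\varphi$ by combining uniform a priori bounds (from Proposition \ref{Prop SE solution}) with a direct energy estimate on the difference $W_n:=Y^{\varphi_n}-Y^\varphi$, handling the only non-trivial term — the one coming from $\int_Z G(t,u^0(t),z)(\varphi_n-\varphi)(t,z)\,\nu(\dif z)$ — via the weak convergence $\varphi_n\rightharpoonup\varphi$ in $L^2(\nu_T)$. First I would record that, by Proposition \ref{Prop SE solution}, $\sup_n\big(\sup_{t}|Y^{\varphi_n}(t)|^2+\int_0^T\|Y^{\varphi_n}(t)\|^2\dif t\big)\le c(m)<\infty$ since $\|\varphi_n\|_2\le m$; the same bound holds for $Y^\varphi$. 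Subtracting the two skeleton equations, $W_n$ solves
\begin{equation*}
\frac{\dif W_n(t)}{\dif t}+\cA W_n(t)+B(W_n(t),u^0(t))+B(u^0(t),W_n(t))=\int_Z G(t,u^0(t),z)\big(\varphi_n(t,z)-\varphi(t,z)\big)\nu(\dif z),
\end{equation*}
with $W_n(0)=0$. Pairing with $W_n(t)$ and using \eqref{eq PB} to kill $\langle B(u^0,W_n),W_n\rangle$, together with (B1)–(B3) to estimate $|\langle B(W_n,u^0),W_n\rangle|\le C|W_n|_Q\|u^0\||W_n|_Q\le Ca_0|W_n|\,\|W_n\|\,\|u^0\|$ and then Young's inequality, I obtain
\begin{equation*}
|W_n(t)|^2+\int_0^t\|W_n(s)\|^2\dif s\le C\int_0^t\|u^0(s)\|^2|W_n(s)|^2\dif s+2\int_0^t\Big\langle\int_Z G(s,u^0(s),z)(\varphi_n-\varphi)(s,z)\nu(\dif z),\,W_n(s)\Big\rangle\dif s.
\end{equation*}
By Gronwall's inequality (using $\int_0^T\|u^0(s)\|^2\dif s\le|u_0|^2<\infty$ from Proposition \ref{Prop X0}), everything reduces to showing the forcing term $\rho_n(t):=\int_0^t\langle \int_Z G(s,u^0(s),z)(\varphi_n-\varphi)(s,z)\nu(\dif z),W_n(s)\rangle\dif s$ tends to $0$ uniformly in $t$.

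The main obstacle is precisely this last point: $W_n$ appears inside $\rho_n$, so one cannot simply invoke weak convergence of $\varphi_n$ against a fixed test function. The standard remedy, which I would adopt, is a two-step argument. First I would show $\sup_t\big|\int_0^t\langle \int_Z G(s,u^0(s),z)(\varphi_n-\varphi)(s,z)\nu(\dif z),\,\zeta(s)\rangle\dif s\big|\to 0$ for a fixed $\zeta\in C([0,T];H)$ (indeed uniformly over $\zeta$ in a compact set): by (H2-G) the map $(s,z)\mapsto\langle G(s,u^0(s),z),\zeta(s)\rangle$ lies in $L^2(\nu_T)$ since $|G(s,u^0(s),z)|\le L_2(s,z)+L_3(s,z)|u^0(s)|$ with $L_2,L_3\in L^2(\nu_T)$ and $u^0$ bounded; then weak convergence $\varphi_n\rightharpoonup\varphi$ gives pointwise-in-$t$ convergence, and an Arzelà–Ascoli / equicontinuity argument (the integrands are uniformly $L^2(\nu_T)$-bounded, giving Hölder-type modulus of continuity in $t$ uniform in $n$) upgrades this to uniform-in-$t$ convergence. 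Second, I would split $W_n=\zeta+(W_n-\zeta)$ where $\zeta$ is chosen close to the limiting difference, but since we do not yet know $W_n$ converges, the cleaner route is: bound the "bad" part $\big|\int_0^t\langle\int_Z G(s,u^0(s),z)(\varphi_n-\varphi)\nu(\dif z),W_n(s)\rangle\dif s\big|$ using Cauchy–Schwarz in $z$ and then in $s$ by $\big(\int_0^T|\int_Z G(s,u^0(s),z)(\varphi_n-\varphi)(s,z)\nu(\dif z)|^2\dif s\big)^{1/2}\big(\int_0^T|W_n(s)|^2\dif s\big)^{1/2}$; the second factor is bounded by $\sqrt{T}\,c(m)$, and although the first factor need not vanish, it is bounded (by $\|L_2\|_2+\|L_3\|_2\sup|u^0|$ times a constant depending on $m$), which feeds into Gronwall to give only a uniform bound — not the convergence.

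To actually get convergence I would instead use the finite-dimensional projection technique the authors advertise in the introduction, or equivalently a compactness argument: from the uniform bounds, $\{W_n\}$ is bounded in $L^\infty(0,T;H)\cap L^2(0,T;V)$, and from the equation $\{\dif W_n/\dif t\}$ is bounded in $L^2(0,T;V')$ plus a term that is weakly convergent; hence along a subsequence $W_n\rightharpoonup W$ weakly in $L^2(0,T;V)$ and, using the Aubin–Lions–type argument adapted to the Gelfand triple without compact embedding (here one uses the specific structure — e.g. testing against a fixed orthonormal basis of $H$ to get convergence in $C([0,T];H_{\mathrm{weak}})$, then an energy-equality argument to upgrade to strong convergence in $C([0,T];H)$), one passes to the limit in the weak formulation to see that $W$ solves the difference equation with $\varphi_n-\varphi$ replaced by $0$, whence $W\equiv 0$ by uniqueness. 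Since every subsequence has a further subsequence converging to the same limit $0$, the whole sequence converges, and the energy equality (lower semicontinuity plus convergence of the quadratic forms) yields strong convergence in $D([0,T];H)\cap L^2([0,T];V)$. The delicate step throughout is replacing the usual Aubin–Lions compactness — unavailable without compact embedding $V\hookrightarrow H$ — by the projection/energy-equality method; this is where assumptions (B1)–(B3) (to control the bilinear term uniformly) and (H1-G)–(H2-G) (to make the noise term a weakly convergent perturbation in $L^2(0,T;H)$) are used decisively.
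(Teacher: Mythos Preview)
Your setup and the energy inequality for $W_n:=Y^{\varphi_n}-Y^\varphi$ match the paper's, and you correctly isolate the real difficulty: the forcing term $\rho_n(t)=\int_0^t\big\langle\int_Z G(s,u^0,z)(\varphi_n-\varphi)\,\nu(\dif z),\,W_n(s)\big\rangle\dif s$ contains $W_n$. The gap is in your resolution. The phrase ``energy equality (lower semicontinuity plus convergence of the quadratic forms)'' does not close the loop here, because the energy identity for $W_n$ reads
\[
\tfrac12|W_n(t)|^2+\int_0^t\|W_n\|^2\,\dif s=-\int_0^t\langle B(W_n,u^0),W_n\rangle\,\dif s+\rho_n(t),
\]
and the right side is not a fixed functional of the data --- both terms vary with $n$. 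Weak convergence $W_n\rightharpoonup 0$ plus lower semicontinuity of norms gives only the trivial direction; to obtain $\limsup\big(\tfrac12|W_n(t)|^2+\int_0^t\|W_n\|^2\big)\le 0$ you must still show $\rho_n(t)\to 0$, which is exactly the circularity you flagged and never broke. (It \emph{can} be broken once $W_n\to 0$ in $C([0,T];H_{\mathrm{weak}})$ is established: then $\langle G(s,u^0(s),z),W_n(s)\rangle\to 0$ pointwise on $[0,T]\times Z$ with the $L^2(\nu_T)$ dominator $(L_2+L_3|u_0|)\sup_{n,s}|W_n(s)|$, so dominated convergence gives strong $L^2(\nu_T)$ convergence and Cauchy--Schwarz against $\varphi_n-\varphi$ yields $\sup_t|\rho_n(t)|\to 0$. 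Your sketch omits this step, and ``lower semicontinuity plus convergence of quadratic forms'' does not substitute for it.)

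The paper does not take the compactness route at all; it controls $\rho_n$ \emph{directly} inside the Gronwall argument via the projection technique you mention only in passing. Split $G=P_kG+(I-P_k)G$. The $(I-P_k)$ piece is handled by Young's inequality, producing $\int_0^T\!\int_Z|(I-P_k)G(s,u^0,z)|^2\,\nu(\dif z)\dif s$ (to $0$ as $k\to\infty$ by DCT) plus $\int_0^t\!\int_Z|\varphi_n-\varphi|^2|W_n|^2$ (fed to Gronwall, since $\|\varphi_n-\varphi\|_2\le 2m$). For the $P_k$ piece, set $\gamma_n^k(t):=\int_0^t\!\int_Z P_kG(s,u^0,z)(\varphi_n-\varphi)(s,z)\,\nu(\dif z)\dif s$; equicontinuity plus $\varphi_n\rightharpoonup\varphi$ and Arzel\`a--Ascoli give $\gamma_n^k\to 0$ in $C([0,T];H_k)$, hence (finite dimensions) in $C([0,T];H)\cap L^2([0,T];V)$. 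The decisive move is integration by parts in time,
\[
\int_0^t\!\!\int_Z\big\langle P_kG\,(\varphi_n-\varphi),W_n\big\rangle\,\nu(\dif z)\dif s=\langle\gamma_n^k(t),W_n(t)\rangle-\int_0^t\Big\langle\gamma_n^k(s),\tfrac{\dif}{\dif s}W_n(s)\Big\rangle\dif s,
\]
followed by substituting the equation for $\dif W_n/\dif s$. Every resulting term pairs $\gamma_n^k$ (which $\to 0$) against something uniformly bounded by the a priori estimates, up to a $\tfrac14|W_n(t)|^2$ absorbed on the left. Gronwall then bounds $|W_n(t)|^2$ by a constant times $\sup_s|\gamma_n^k(s)|+\big(\int_0^T\|\gamma_n^k\|^2\big)^{1/2}+\int\!\int|(I-P_k)G|^2$, and one lets $n\to\infty$ then $k\to\infty$. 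Integration by parts converts the product of two $n$-dependent objects into products where one factor genuinely converges and the other is merely bounded --- this is what the compactness-free setting demands, and treating it as interchangeable with an Aubin--Lions/energy-equality argument misses the point.
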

\begin{proof}
Proposition \ref{thm solution}(iii) gives the following uniform bound on $Y^{\varphi_n}$ and $Y^\varphi$ in $C([0,T];H)\cap L^2([0,T];V)$:
\begin{equation}\label{esti sk}
	\sup_{n\in \mN}\left(\sup_{t\in [0,T]}|Y^{\varphi_n}(t)|^2+\int_{0}^{T}\Vert Y^{\varphi_n}(t)\Vert^2\dif t \right)
    +\sup_{t\in [0,T]}|Y^\varphi(t)|^2+\int_{0}^{T}\|Y^\varphi(t)\|^2\dif t\leq  c_1, \\
\end{equation}
where the constant $c_1$ depends on $m$. 

Subtracting the two skeleton equations, testing the difference by $Y^{\varphi_n}-Y^\varphi$, and using \eqref{eq PB}, we obtain, for $t\in[0,T]$,
\begin{eqnarray}\label{MDP1-0}
	&&\frac{1}{2}|Y^{\varphi_n}(t)-Y^\varphi(t)|^2+\int_{0}^{t}\|Y^{\varphi_n}(s)-Y^\varphi(s)\|^2\dif s \nonumber\\
	&= &-\int_{0}^{t}\langle B(Y^{\varphi_n}(s)-Y^\varphi(s), u^0(s)), Y^{\varphi_n}(s)-Y^\varphi(s)\rangle\dif s \nonumber\\
	&&+\int_{0}^{t}\int_{Z}\langle G(s,u^0(s),z)(\varphi_n(s,z)-\varphi(s,z)),Y^{\varphi_n}(s)-Y^\varphi(s)\rangle \nu (\dif z)\dif s.
\end{eqnarray}
The first term on the RHS of \eqref{MDP1-0} is bounded by 
\begin{equation}\label{MDP1-1}
	\frac{1}{2}\int_{0}^{t}\|Y^{\varphi_n}(s)-Y^\varphi(s)\|^2\dif s+C_1\int_{0}^{t}\|u^0(s)\|^2|Y^{\varphi_n}(s)-Y^\varphi(s)|^2\dif s,
\end{equation}
using \eqref{p-5}, \eqref{p-4} and Young's inequality.

We use the combination of finite-dimensional projections and integration by parts to treat the second term. 
Since $V$ is dense in $H$, choose an orthonormal basis $\{e_i\}_{i\in\mathbb{N}}$ of $H$ such that $e_i\in V$ for every $i$. Let $H_k:=\operatorname{span}\{e_1,\ldots,e_k\}$, and let $P_k:H\to H_k$ be the orthogonal projection. For fixed $k\in\mN$, define
%
\begin{equation*}
	\gamma_n^k(t):=\int_{0}^{t}\int_{Z} P_k G(s,u^0(s),z)(\varphi_n(s,z)-\varphi(s,z))\nu(\dif z)\dif s.
\end{equation*}
%
We next show that $\gamma_n^k\rightarrow 0$ in $C([0,T];H)\cap L^2([0,T];V)$ as $n\to\infty$. 
For $0\le t\le t'\le T$, (H2) gives
\begin{eqnarray*}
	|\gamma_n^k(t')-\gamma_n^k(t)|^2_{H_k}&\leq & \int_{t}^{t'}\int_{Z}2(L_2^2(s,z)+L_3^2(s,z)|u^0(s)|^2)\nu(\dif z)\dif s \nonumber\\ &&\times\int_{t}^{t'}\int_{Z}|\varphi_n(s,z)-\varphi(s,z)|^2\nu(\dif z)\dif s,
\end{eqnarray*}
%
%
so $\{\gamma_n^k\}_{n\in\mathbb{N}}$ is equicontinuous on $[0,T]$, because $\{\varphi_n\}$ and $\varphi$ are bounded in $L^2(\nu_T)$.
For every fixed $t\in[0,T]$ and $i\leq k$, the function $(s,z)\mapsto \mathbf{1}_{[0,t]}(s)\langle G(s,u^0(s),z),e_i\rangle$ 
belongs to $L^2(\nu_T)$ by (H2). The weak convergence $\varphi_n\rightharpoonup\varphi$ therefore implies $\langle\gamma_n^k(t),e_i\rangle\to0$. Thus $\gamma_n^k(t)\to0$ in $H_k$ for every $t$. The Arzelà--Ascoli theorem now yields convergence of the entire sequence,
\[
\gamma_n^k\longrightarrow0\quad\text{ in }C([0,T];H_k), \qquad \text{ as } n\rightarrow\infty. 
\]
Since all norms on $H_k$ are equivalent and $H_k\subseteq V$, the convergence also holds in $C([0,T];H)\cap L^2([0,T];V)$.

Moreover, $\frac{\dif }{\dif t}\gamma_n^k(t)\in L^1([0,T];H)$, since
\begin{equation*}
	\int_{0}^{T}\left|\frac{\dif }{\dif t}\gamma_n^k(t)\right|\dif t 
	\leq \left(\int_{0}^{T}\int_{Z}|G(t,u^0(t),z)|^2\nu(\dif z)\dif t\right)^{\frac12}\|\varphi_n-\varphi\|_2<\infty.
\end{equation*}
The integration-by-parts formula therefore gives
%
%
\begin{eqnarray*}
	\langle \gamma_n^k(t), Y^{\varphi_n}(t)-Y^\varphi(t) \rangle &= &\int_{0}^{t}\left\langle\frac{\dif }{\dif s}\gamma_n^k(s),Y^{\varphi_n}(s)-Y^\varphi(s) \right\rangle\dif s \nonumber\\
	&&+ \int_{0}^{t}\left\langle\frac{\dif }{\dif s}\left(Y^{\varphi_n}(s)-Y^\varphi(s)\right),\gamma_n^k(s) \right\rangle\dif s.
\end{eqnarray*}
Therefore,
\begin{eqnarray}\label{MDP1-2}
	&&\int_{0}^{t}\int_{Z} \langle P_kG(s,u^0(s),z)(\varphi_n(s,z)-\varphi(s,z)), Y^{\varphi_n}(s)-Y^\varphi(s) \rangle\nu(\dif z)\dif s\nonumber\\
	&= & \langle\gamma_n^k(t), Y^{\varphi_n}(t)-Y^\varphi(t) \rangle-\int_{0}^{t}\left\langle\frac{\dif }{\dif s}\left(Y^{\varphi_n}(s)-Y^\varphi(s)\right),\gamma_n^k(s) \right\rangle\dif s \nonumber\\
	&= &  \langle\gamma_n^k(t), Y^{\varphi_n}(t)-Y^\varphi(t) \rangle 
	+ \int_{0}^{t}\left\langle \cA(Y^{\varphi_n}(s)-Y^\varphi(s)),\gamma_n^k(s) \right\rangle\dif s \nonumber\\
	&& + \int_{0}^{t}\Big[\left\langle B(Y^{\varphi_n}(s)-Y^\varphi(s),u^0(s)),\gamma_n^k(s) \right\rangle +\left\langle B(u^0(s),Y^{\varphi_n}(s)-Y^\varphi(s)),\gamma_n^k(s) \right\rangle\Big]\dif s \nonumber\\
	&& -\int_{0}^{t}\int_{Z} \left\langle G(s,u^0(s),z)(\varphi_n(s,z)-\varphi(s,z)),\gamma_n^k(s) \right\rangle\nu(\dif z)\dif s \nonumber\\
	&=: &I_1+I_2+I_3+I_4.
\end{eqnarray}
We estimate these terms as follows.
\begin{eqnarray}\label{MDP1-3}
	I_1&\leq &\frac{1}{4}|Y^{\varphi_n}(t)-Y^\varphi(t)|^2+C|\gamma_n^k(t)|^2, \nonumber\\
	I_2&\leq &\left(\int_{0}^{t}\|\cA(Y^{\varphi_n}(s)-Y^\varphi(s))\|_{V'}^2\dif s\right)^{\frac{1}{2}}\left(\int_{0}^{t}\|\gamma_n^k(s)\|^2\dif s\right)^{\frac{1}{2}} \nonumber\\
	&\leq & \sqrt{2}\left(\int_{0}^{t} \big( \|Y^{\varphi_n}(s)\|^2+\|Y^\varphi(s)\|^2 \big)\dif s\right)^{\frac{1}{2}}\left(\int_{0}^{t}\|\gamma_n^k(s)\|^2\dif s\right)^{\frac{1}{2}} \nonumber\\
	&\leq & \sqrt{2c_1}\left(\int_{0}^{t}\|\gamma_n^k(s)\|^2\dif s\right)^{\frac{1}{2}}, \nonumber\\
	I_4&\leq & \sqrt{2}m\sup_{s\in [0,t]}|\gamma_n^k(s)|\left(\int_{0}^{t}\int_{Z}|G(s,u^0(s),z)|^2\nu(\dif z)\dif s\right)^{\frac{1}{2}}\leq \sqrt{2c_2}m\sup_{s\in [0,t]}|\gamma_n^k(s)|,
\end{eqnarray}
where $c_2$ is given by 
\begin{equation*}\label{esti intG}
	\int_{0}^{T}\int_{Z}|G(t,u^0(t),z)|^2\nu(\dif z)\dif t 
	\leq  2\left(\|L_2\|_2^2+\|L_3\|_2^2 |u_0|^2\right)
	=: c_2.
\end{equation*}
As for $I_3$, consider
\begin{eqnarray*}
    &&\int_{0}^{t}\left\langle B(Y^{\varphi_n}(s)-Y^\varphi(s),u^0(s)),\gamma_n^k(s) \right\rangle \dif s \nonumber\\
    &\leq & Ca_0\int_{0}^{t}|Y^{\varphi_n}(s)-Y^\varphi(s)|^{\frac{1}{2}}\|Y^{\varphi_n}(s)-Y^\varphi(s)\|^{\frac{1}{2}}\|u^0(s)\||\gamma_n^k(s)|^{\frac{1}{2}}\|\gamma_n^k(s)\|^{\frac{1}{2}} \dif s \nonumber\\
    &\leq & Ca_0 \sup_{s\in [0,t]}|Y^{\varphi_n}(s)-Y^\varphi(s)|^{\frac{1}{2}}\sup_{s\in [0,t]}|\gamma_n^k(s)|^{\frac{1}{2}}\nonumber\\
    && \times\left(\int_{0}^{t} \|Y^{\varphi_n}(s)-Y^\varphi(s)\|^2 \dif s\right)^{\frac{1}{4}}\left(\int_{0}^{t} \|u^0(s)\|^2 \dif s\right)^{\frac{1}{2}}\left(\int_{0}^{t} \|\gamma_n^k(s)\|^2 \dif s\right)^{\frac{1}{4}} \nonumber\\
    &\leq & C|u_0|\sqrt{c_1}\left[ \sup_{t\in [0,T]}|\gamma_n^k(t)|+\left(\int_{0}^{t}\|\gamma_n^k(s)\|^2\dif s\right)^{\frac{1}{2}}\right],
\end{eqnarray*}
where we have used \eqref{p-5}, \eqref{p-4}, H\"older's inequality, and the estimates \eqref{esti X0} and \eqref{esti sk}. The other term in $I_3$ is estimated similarly, and hence
\begin{equation}\label{MDP1-4}
	I_3\leq C|u_0|\sqrt{c_1}\left[ \sup_{t\in [0,T]}|\gamma_n^k(t)|+\left(\int_{0}^{t}\|\gamma_n^k(s)\|^2\dif s\right)^{\frac{1}{2}}\right].
\end{equation}
For the projection error, the Cauchy--Schwarz and Young inequalities give
\begin{align*}
&\int_0^t\int_Z
\big|\langle (I-P_k)G(s,u^0(s),z)(\varphi_n(s,z)-\varphi(s,z)),
Y^{\varphi_n}(s)-Y^\varphi(s)\rangle\big|\,\nu(\dif z)\dif s\\
&\quad\leq \frac12\int_0^t\int_Z|(I-P_k)G(s,u^0(s),z)|^2\nu(\dif z)\dif s\\
&\qquad+\frac12\int_0^t\int_Z|\varphi_n(s,z)-\varphi(s,z)|^2
|Y^{\varphi_n}(s)-Y^\varphi(s)|^2\nu(\dif z)\dif s.
\end{align*}
Substituting this bound and \eqref{MDP1-1}--\eqref{MDP1-4} into \eqref{MDP1-0}, we obtain
\begin{eqnarray*}
	&&\frac{1}{4}|Y^{\varphi_n}(t)-Y^\varphi(t)|^2+\frac{1}{2}\int_{0}^{t}\|Y^{\varphi_n}(s)-Y^\varphi(s)\|^2\dif s \nonumber\\
	&\leq & C_2\left[\sup_{t\in [0,T]}|\gamma_n^k(t)|+\left(\int_{0}^{t}\|\gamma_n^k(s)\|^2\dif s\right)^{\frac{1}{2}} \right]\nonumber\\
	&& + \int_{0}^{t}\int_{Z}|(I-P_k)G(s,u^0(s),z)|^2\nu(\dif z)\dif s + \int_{0}^{t}\int_{Z}|\varphi_n(s,z)-\varphi(s,z)|^2|Y^{\varphi_n}(s)-Y^\varphi(s)|^2\nu(\dif z)\dif s \nonumber\\
	&&+ C_1\int_{0}^{t}\|u^0(s)\|^2|Y^{\varphi_n}(s)-Y^\varphi(s)|^2\dif s,\ t\in[0,T].
\end{eqnarray*}
Applying Gronwall's inequality yields, for every $t\in[0,T]$,
\begin{eqnarray*}
	&&|Y^{\varphi_n}(t)-Y^\varphi(t)|^2 \nonumber\\
	&\leq & \widetilde{C}_2\left\{\left[\sup_{t\in [0,T]}|\gamma_n^k(t)|+\left(\int_{0}^{t}\|\gamma_n^k(s)\|^2\dif s\right)^{\frac{1}{2}} \right] +\int_{0}^{t}\int_{Z}|(I-P_k)G(s,u^0(s),z)|^2\nu(\dif z)\dif  s\right\} \nonumber\\
	&& \times \exp\left\{\widetilde{C}_1\left(\int_{0}^{T}\|u^0(s)\|^2\dif s+\int_{0}^{T}\int_{Z}|\varphi_n(s,z)-\varphi(s,z)|^2\nu(\dif z)\dif s\right)\right\}.
\end{eqnarray*}
By the boundedness $\{\varphi_n\}$ and $\varphi$ in $L^2(\nu_T)$, the fact that $\gamma_n^k\rightarrow 0$ in $C([0,T];H)\cap L^2([0,T];V)$ as $n\to\infty$, and the convergence that
\begin{equation*}
\int_{0}^{T}\int_{Z}|(I-P_k)G(s,u^0(s),z)|^2\nu(\dif z)\dif s \longrightarrow0
\qquad\text{as }k\to\infty,
\end{equation*} 
we first letting $n\to\infty$ and then $k\to\infty$ to obtain
\[
\sup_{t\in[0,T]}|Y^{\varphi_n}(t)-Y^\varphi(t)|^2
+\int_0^T\|Y^{\varphi_n}(s)-Y^\varphi(s)\|^2\dif s\longrightarrow0, \qquad\text{ as } n\to\infty,
\]
which in particular implies the convergence in the metric of $D([0,T];H)\cap L^2([0,T];V)$.
\end{proof}

\subsection{Verification of (C2)}\label{S:5}
This subsection is devoted to the verification of (C2). The following lemma, taken from  \cite[Lemma 4.3]{BDG16} and \cite[Lemma 10.18]{BD19}, provides uniform estimates for  $\varphi_\varepsilon$ and $\psi_\varepsilon$.
\begin{lemma}
	Fix $m\in(0,\infty)$.
	\begin{itemize}
		\item[(a)] There exist $\Gamma_m, \rho_m,\xi_m:(0, \infty) \rightarrow(0, \infty)$ such that $\Gamma_m(s) \to 0$ as $s \to \infty$, $\xi_m(s)\to 0$ as $s\to 0$, and for all $I \in \mathcal{B}([0, T])$ and $\varepsilon, \beta \in(0, \infty)$,
		\begin{align}
		    &\sup _{\varphi \in S_\varepsilon^m} \int_{I \times Z}\left(L_1(s,z)+L_2(s,z)+L_3(s,z)\right)|\varphi(s, z)| \mathbf{1}_{\{|\varphi| \geqslant \beta / a(\varepsilon)\}}(s,z) \nu(\mathrm{d} z) \mathrm{d} s \nonumber\\
		      &\qquad \leq \sqrt{a(\varepsilon)}\Gamma_m(\beta) +(1+{\rm Leb}_{ \rm T}(I))\xi_m(\varepsilon), \label{coef2}\\
	        &\sup _{\varphi \in S_\varepsilon^m} \int_{I \times Z}\left(L_1(s,z)+L_2(s,z)+L_3(s,z)\right)|\varphi(s,z)|  \nu(\mathrm{d} z) \mathrm{d} s \nonumber\\
	        &\qquad \leq \rho_m(\beta)\sqrt{{\rm Leb}_{ \rm T}(I)}+\sqrt{a(\varepsilon)}\Gamma_m(\beta) +(1+{\rm Leb}_{ \rm T}(I))\xi_m(\varepsilon). \label{coef3}
		\end{align}
		\item[(b)] There exists $\zeta_m\in (0,\infty)$ such that for all $I\in\mathcal{B}([0,T])$ and $\varepsilon\in(0,\infty)$,
		\begin{equation}\label{coef1}
			\sup_{\psi\in S^m_{+,\varepsilon}}\int_{I\times Z}\left(L_1^2(s,z)+L_2^2(s,z)+L_3^2(s,z)\right)\psi(s,z)\nu(\dif z)\dif s\leq \zeta_m(a^2(\varepsilon)+{\rm Leb}_{\rm T}(I)).
		\end{equation}
	\end{itemize}
\end{lemma}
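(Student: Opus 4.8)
These are the L\'evy-noise control estimates underlying the weak-convergence method; they are the analogues of \cite[Lemma 4.3]{BDG16} and \cite[Lemma 10.18]{BD}, and the plan is to adapt those proofs to the coefficient bounds of Assumption \ref{con G}. The only new feature is that three coefficients $L_1,L_2,L_3$ occur additively: since each lies in $L^2(\nu_T)\cap\mathcal{H}^\rho$, the arithmetic--geometric mean inequality $e^{\frac{\rho}{3}(L_1+L_2+L_3)}\le\frac13\sum_i e^{\rho L_i}$ shows $L:=L_1+L_2+L_3\in L^2(\nu_T)\cap\mathcal{H}^{\rho_0}$ with $\rho_0:=\rho/3$, so it suffices to prove \eqref{coef2}--\eqref{coef3} with $L_1+L_2+L_3$ replaced by one nonnegative $L\in L^2(\nu_T)\cap\mathcal{H}^{\rho_0}$ and \eqref{coef1} with $L_1^2+L_2^2+L_3^2$ replaced by $L^2$. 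I would then use four facts: (i) the Fenchel--Young inequality for the pair $\ell$, $\ell^*(\theta)=e^\theta-1$, namely $\theta r\le\sigma^{-1}\ell(r)+\sigma^{-1}(e^{\sigma\theta}-1)$ for $\theta,r\ge0$, $\sigma>0$; (ii) the convexity/superlinearity of $\ell$: $r\le\frac{1+\delta}{\ell(1+\delta)}\ell(r)$ for $r\ge1+\delta$, $(1-r)^2\le2\ell(r)$ for $r\in[0,1]$, and $r\le\ell(r)+(e-1)$ for all $r\ge0$; (iii) the defining bound $Q(g)\le ma^2(\epsilon)$ together with Chebyshev, which gives $\nu_T\big(\{|g-1|\ge\delta\}\cap(I\times Z)\big)\le ma^2(\epsilon)/\big(\ell(1+\delta)\wedge\ell(1-\delta)\big)$; and (iv) the finiteness of the measure $L^2\,\dif\nu_T$ together with the exponentially decaying tail $\int_{\{L>K\}}e^{\sigma L}\,\dif\nu_T\le C_\sigma e^{-(\rho_0-\sigma)K}$ for $0<\sigma<\rho_0$, which follows because $\{L>1\}$ has finite $\nu_T$-measure and $e^{\rho_0 L}$ is $\nu_T$-integrable on it.

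For part (a) I would write $a(\epsilon)\varphi=g-1$ with $g\in S^m_{+,\epsilon}$, so that the left side of \eqref{coef2} equals $a(\epsilon)^{-1}\int_{Z\times I}L\,|g-1|\,1_{\{|g-1|\ge\beta\}}\,\dif\nu_T$, and decompose the domain along $\{g\ge1+\beta\}$ and $\{g\le1-\beta\}$ (the second empty for $\beta\ge1$), and inside each piece along $\{L\le K(\epsilon)\}$ and $\{L>K(\epsilon)\}$, where $K(\epsilon):=c_\ast\log(1/a(\epsilon))\uparrow\infty$ for a fixed $c_\ast$. On the ``moderate $L$'' pieces one uses (i) with a fixed $\sigma$ to replace $|g-1|\le g$ (resp. $|g-1|=1-g\le1$) by an $\ell(g)$-term plus a term $\lesssim e^{\sigma K(\epsilon)}$, then (iii) to convert the measures of $\{g\ge1+\beta\}$, $\{g\le1-\beta\}$ and the energy into powers of $a(\epsilon)$; with $\sigma c_\ast\le\tfrac12$ one has $a(\epsilon)e^{\sigma K(\epsilon)}=a(\epsilon)^{1-\sigma c_\ast}\le\sqrt{a(\epsilon)}$, which is exactly what lets the resulting term be written as $\sqrt{a(\epsilon)}\,\Gamma_m(\beta)$ with $\Gamma_m(\beta)$ built from $\ell(1\pm\beta)^{-1}$ --- this is allowed to blow up as $\beta\to0$ but vanishes as $\beta\to\infty$ (on $\{g\ge1+\beta\}$ because $\ell(1+\beta)\to\infty$, on $\{g\le1-\beta\}$ because the set is then empty), while the leftover $\ell(g)$-contribution feeds $\xi_m(\epsilon)$. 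On the two ``large $L$'' pieces one bounds $|g-1|1_{\{g\ge1+\beta\}}\le g$ (resp. $|g-1|1_{\{g\le1-\beta\}}\le1$), applies (i) again (resp. $L\le Ce^{(\rho_0/2)L}$), and uses (iv): the tail is $\lesssim e^{-(\rho_0-\sigma)K(\epsilon)}=a(\epsilon)^{(\rho_0-\sigma)c_\ast}$, so after dividing by $a(\epsilon)$ it still tends to $0$ provided $(\rho_0-\sigma)c_\ast>1$ (compatible with $\sigma c_\ast\le\tfrac12$ as soon as $\sigma<\rho_0/3$), and this also goes into $\xi_m(\epsilon)$ (the factor $1+{\rm Leb}_T(I)$ there is a harmless weakening that also absorbs the part of the $\ell(g)$-term which one prefers to keep proportional to ${\rm Leb}_T(I)$). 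Taking $\sup_{\varphi\in S^m_\epsilon}$ and $\sup_I$ --- legitimate since every bound used only $Q(g)\le ma^2(\epsilon)$ and the $\epsilon$-independent integrability of $L$ --- yields \eqref{coef2}. For \eqref{coef3} one adds the complementary region $\{|g-1|<\beta\}$, on which $L|\varphi|$ is integrated by Cauchy--Schwarz in $L^2(\nu_T)$ using that $\varphi 1_{\{|\varphi|\le\beta/a(\epsilon)\}}$ stays in a fixed $L^2(\nu_T)$-ball (as recalled just before the lemma); combined with $L\in L^2(\nu_T)$ this produces the extra term $\rho_m(\beta)\sqrt{{\rm Leb}_T(I)}$.

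For part (b), with $\psi\in S^m_{+,\epsilon}$ (so $Q(\psi)\le ma^2(\epsilon)$), I would split $\psi=\psi 1_{\{\psi\le2\}}+\psi 1_{\{\psi>2\}}$: on $\{\psi\le2\}$ one has $L^2\psi\le2L^2$, and the local integrability of $L^2\,\dif\nu_T$ over $I\times Z$ gives the ${\rm Leb}_T(I)$-contribution; on $\{\psi>2\}$ one uses $\psi\le\frac{2}{\ell(2)}\ell(\psi)$, truncates $L$ at a fixed level so the bounded part contributes $\lesssim\int\ell(\psi)\le ma^2(\epsilon)$, and handles the part $\{L>K\}$ via $\psi 1_{\{\psi>2\}}\le\ell(\psi)+(e-1)$ and the exponential integrability from $\mathcal{H}^\rho$; summing gives $\le\zeta_m(a^2(\epsilon)+{\rm Leb}_T(I))$. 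I expect the main difficulty to lie not in any single estimate but in the bookkeeping of part (a): the coefficient-truncation level $K(\epsilon)$ must grow (at rate $\log(1/a(\epsilon))$) fast enough that the exponentially small tails of $L$ beat $a(\epsilon)$, yet slowly enough that the factors $e^{\sigma K(\epsilon)}$ stay $O(\sqrt{a(\epsilon)})$ --- this interval of admissible rates being nonempty precisely because Assumption \ref{con G} supplies $L_i$ both in $L^2(\nu_T)$ \emph{and} in $\mathcal{H}^\rho$ --- while the $\beta$-dependence (which may blow up as $\beta\to0$ but must vanish as $\beta\to\infty$) has to be quarantined in $\Gamma_m,\rho_m$ away from the $\epsilon$-dependence in $\xi_m,\zeta_m$; this is what dictates performing the decomposition in $g$ and the decomposition in $L$ in the order described above.
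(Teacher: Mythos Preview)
The paper does not give its own proof of this lemma; it simply refers the reader to \cite[Lemma~4.3]{BDG16} and \cite[Lemma~10.18]{BD}. Your sketch is a faithful reconstruction of exactly those arguments (Fenchel--Young for the pair $\ell,\ell^*$, the superlinearity bounds on $\ell$, truncation of the coefficient at a logarithmic level $K(\epsilon)$, and the exponential tail supplied by $\mathcal{H}^\rho$), so your approach coincides with what the paper invokes.
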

Recall the controlled SPDE \eqref{eq MDP 1-second} satisfied by $M^{\psi_\varepsilon}=\Gamma^{\varepsilon}(\varepsilon N^{\varepsilon^{-1}\psi_\varepsilon})$. We first establish a uniform estimate for this process.
\begin{lemma}\label{estiMq}
	For any $m\in(0,\infty)$, let $\{\psi_\varepsilon\}_{\varepsilon>0}$ be such that for every $\varepsilon>0$,
	$\psi_\varepsilon\in \mathcal{S}^m_{+,\varepsilon}$. Then there exists $\varepsilon_0>0$ such that 
	$$
	\sup_{\varepsilon\in (0,\varepsilon_0] }\mathbb{E} \left[\sup_{t\in[0,T]}|M^{\psi_\varepsilon}(t)|^2+\int_{0}^{T}\|M^{\psi_\varepsilon}(s)\|^2\dif s\right]<\infty.	$$
\end{lemma}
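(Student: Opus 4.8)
The plan is to perform a standard energy estimate on Eq. \eqref{eq MDP 1-second}, controlling each term using Assumptions \ref{p-2} and \ref{con G} together with the a priori bounds \eqref{esti X0} on $u^0$ and the moment controls on the controlled Poisson measure supplied by part (b) of the preceding lemma, and then close the estimate via a stochastic Gronwall argument. First I would write, by It\^o's formula applied to $|M^{\psi_\epsilon}(t)|^2$ (using the self-adjointness and positivity of $\cA$, which gives $\langle \cA M^{\psi_\epsilon},M^{\psi_\epsilon}\rangle=\|M^{\psi_\epsilon}\|^2$), the identity
\begin{align*}
|M^{\psi_\epsilon}(t)|^2+2\int_0^t\|M^{\psi_\epsilon}(s)\|^2\dif s
&= -\frac{2}{a(\epsilon)}\int_0^t\Big(\langle B(a(\epsilon)M^{\psi_\epsilon}(s)+u^0(s),a(\epsilon)M^{\psi_\epsilon}(s)+u^0(s)),M^{\psi_\epsilon}(s)\rangle\\
&\qquad\qquad -\langle B(u^0(s),u^0(s)),M^{\psi_\epsilon}(s)\rangle\Big)\dif s
+ \text{(jump martingale)} + \text{(compensator)}\\
&\quad + \frac{2}{a(\epsilon)}\int_0^t\int_Z\langle G(s,a(\epsilon)M^{\psi_\epsilon}(s)+u^0(s),z),M^{\psi_\epsilon}(s)\rangle(\psi_\epsilon(s,z)-1)\nu(\dif z)\dif s.
\end{align*}

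For the drift term I would expand the bilinear form and use the skew-symmetry \eqref{eq PB}: the genuinely quadratic-in-$M^{\psi_\epsilon}$ part $\langle B(a(\epsilon)M^{\psi_\epsilon},a(\epsilon)M^{\psi_\epsilon}),M^{\psi_\epsilon}\rangle$ vanishes, and of the remaining cross terms, $\langle B(u^0,M^{\psi_\epsilon}),M^{\psi_\epsilon}\rangle$ also vanishes, so after dividing by $a(\epsilon)$ one is left with terms of the form $\langle B(M^{\psi_\epsilon},u^0),M^{\psi_\epsilon}\rangle$ and $\langle B(M^{\psi_\epsilon},u^0)+B(u^0,M^{\psi_\epsilon}),u^0\rangle/a(\epsilon)$-type contributions; all of these are estimated by \eqref{p-5}, \eqref{p-4} and Young's inequality exactly as in \eqref{MDP1-1}, absorbing a fraction of $\int_0^t\|M^{\psi_\epsilon}(s)\|^2\dif s$ into the left side and leaving a factor $\int_0^t\|u^0(s)\|^2|M^{\psi_\epsilon}(s)|^2\dif s$ plus an integrable term in $\|u^0\|^2$ (finite by \eqref{esti X0}). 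For the Poisson terms I would use (H2-G): the compensator and the quadratic variation of the jump martingale produce, after the change of intensity $\epsilon^{-1}\psi_\epsilon$, integrals of $(L_2^2+L_3^2(|M^{\psi_\epsilon}|^2+|u^0|^2))\psi_\epsilon$ against $\nu_T$, which by \eqref{coef1} are bounded by $\zeta_m(a^2(\epsilon)+T)$ times a constant depending on $\sup_t|u^0(t)|^2\le|u_0|^2$, plus a term $\int_0^t g(s)|M^{\psi_\epsilon}(s)|^2\dif s$ with $g$ deterministic and $\nu_T$-integrable; the prefactors $\epsilon/a(\epsilon)$ and $1/a(\epsilon)$ combine with $a^2(\epsilon)$ favorably using \eqref{scale}, so for $\epsilon\le\epsilon_0$ these contributions are uniformly bounded. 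The last integral is handled similarly using Cauchy--Schwarz in $z$ and \eqref{coef1}.

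After taking $\sup_{t\in[0,T]}$ and expectation, the jump martingale term is controlled by the Burkholder--Davis--Gundy inequality, whose bracket is again dominated using (H2-G) and \eqref{coef1}, and one splits off a small multiple of $\mathbb{E}[\sup_{s\le T}|M^{\psi_\epsilon}(s)|^2]$ to absorb on the left. This leaves an inequality of the form
$$
\mathbb{E}\Big[\sup_{t\le T}|M^{\psi_\epsilon}(t)|^2+\int_0^T\|M^{\psi_\epsilon}(s)\|^2\dif s\Big]\le C_m+C_m\int_0^T\big(\|u^0(s)\|^2+g(s)\big)\,\mathbb{E}\big[\sup_{r\le s}|M^{\psi_\epsilon}(r)|^2\big]\dif s,
$$
with $C_m$ independent of $\epsilon\in(0,\epsilon_0]$ and $\int_0^T(\|u^0(s)\|^2+g(s))\dif s<\infty$; Gronwall's inequality then yields the claimed uniform bound. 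The main obstacle is the careful bookkeeping of the $a(\epsilon)$-powers: one must verify that every term coming from the Poisson integral with its $\epsilon/a(\epsilon)$ or $1/a(\epsilon)$ prefactor, once paired with the $a^2(\epsilon)$ in \eqref{coef1} or with the $L^2$-norm bound on $\varphi_\epsilon 1_{\{|\varphi_\epsilon|\le\beta/a(\epsilon)\}}$, stays bounded as $\epsilon\to0$ under \eqref{scale}; the nonlinear drift term, by contrast, is routine because the dangerous top-order piece cancels identically by skew-symmetry.
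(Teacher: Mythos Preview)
Your plan is essentially the paper's: It\^o's formula, simplify the nonlinear drift via skew-symmetry, control the jump terms with (H2-G), BDG and \eqref{coef1}, then close with Gronwall. Two corrections are in order.

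First (minor): after expanding $B(a(\epsilon)M^{\psi_\epsilon}+u^0,a(\epsilon)M^{\psi_\epsilon}+u^0)-B(u^0,u^0)$ and pairing with $M^{\psi_\epsilon}$, the \emph{only} surviving term is $-\langle B(M^{\psi_\epsilon},u^0),M^{\psi_\epsilon}\rangle$; there are no residual ``$/a(\epsilon)$'' contributions, since the pairing is with $M^{\psi_\epsilon}$, not with $u^0$.

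Second (substantive): the order of operations you propose --- take expectation first, then apply Gronwall with a deterministic weight $g$ --- does not close. The control-drift term
\[
J_4=\int_0^t\!\!\int_Z\langle G(s,a(\epsilon)M^{\psi_\epsilon}+u^0,z),M^{\psi_\epsilon}\rangle\,\varphi_\epsilon(s,z)\,\nu(\dif z)\dif s
\]
is not handled by ``Cauchy--Schwarz in $z$ and \eqref{coef1}''; the relevant estimate is \eqref{coef3}. After (H2-G) and $2|M^{\psi_\epsilon}|\le 1+|M^{\psi_\epsilon}|^2$ one is left with a contribution of the form
\[
C\int_0^t\Big(\int_Z(L_2+L_3)(s,z)|\varphi_\epsilon(s,z)|\,\nu(\dif z)\Big)|M^{\psi_\epsilon}(s)|^2\,\dif s,
\]
whose weight is \emph{random} (it depends on $\varphi_\epsilon$) and of order one, not $o(1)$. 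It can therefore neither be absorbed on the left after taking $\mathbb{E}[\sup_t\,\cdot\,]$ nor factored through the expectation to produce your $\int_0^T g(s)\,\mathbb{E}[\sup_{r\le s}|M^{\psi_\epsilon}(r)|^2]\dif s$. The paper's remedy is to apply Gronwall \emph{pathwise} first: by \eqref{coef3} the random exponent $\int_0^T\!\int_Z(L_2+L_3)|\varphi_\epsilon|\,\nu(\dif z)\dif s$ is bounded uniformly in $\omega$ and in $\epsilon$, so the Gronwall factor $e^{CD_\epsilon}$ is dominated by a deterministic constant. Only after this does one take $\sup_t$, then $\mathbb{E}$, apply BDG to the martingale $J_2$, and absorb the resulting term $\epsilon C\,\zeta_m(a^2(\epsilon)+T)\,\mathbb{E}[\sup_t|M^{\psi_\epsilon}|^2]$ --- which \emph{is} small thanks to \eqref{coef1} and \eqref{scale} --- on the left.
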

\begin{proof}
	By equation \eqref{eq MDP 1-second} and It$\mathrm{\hat{o}}$'s formula, for $t\in[0,T]$ we have
	\begin{eqnarray}\label{Mq0}
		&&\frac{1}{2}|M^{\psi_\varepsilon}(t)|^2+\int_0^t\| M^{\psi_\varepsilon}(s)\|^2\dif s\nonumber\\
		&=& -\frac{1}{a(\varepsilon)}\int_{0}^{t}\langle B\left(a(\varepsilon)M^{\psi_\varepsilon}(s)+u^0(s),a(\varepsilon)M^{\psi_\varepsilon}(s)+u^0(s)\right)-B(u^0(s),u^0(s)), M^{\psi_\varepsilon}(s)\rangle\dif s\nonumber\\
		&&+\frac{\varepsilon}{a(\varepsilon)}\int_{0}^{t}\int_{Z}\langle G(s,a(\varepsilon)M^{\psi_\varepsilon}(s-)+u^0(s-),z),M^{\psi_\varepsilon}(s-)\rangle\widetilde{N}^{\varepsilon^{-1}\psi_\varepsilon}(\dif z,\dif s)\nonumber\\
		&&+\frac{\varepsilon^2}{2a^2(\varepsilon)}\int_{0}^{t}\int_{Z} |G(s,a(\varepsilon)M^{\psi_\varepsilon}(s-)+u^0(s-),z)|^2 N^{\varepsilon^{-1}\psi_\varepsilon}(\dif z,\dif s)\nonumber\\
		&&+\frac{1}{a(\varepsilon)}\int_{0}^{t}\int_{Z}\langle G(s,a(\varepsilon)M^{\psi_\varepsilon}(s-)+u^0(s-),z)
		(\psi_\varepsilon(s,z)-1),M^{\psi_\varepsilon}(s)\rangle\nu(\dif z)\dif s \nonumber\\
		&=:&J_1+J_2+J_3+J_4.
	\end{eqnarray}
	By the bilinearity of the operator $B$, \eqref{p-4}-\eqref{eq PB} and Young's inequality we have
	\begin{equation}\label{Mq1}
		J_1	=-\int_{0}^{t} \langle B\left(M^{\psi_\varepsilon}(s),u^0(s)\right),M^{\psi_\varepsilon}(s) \rangle\dif s 
		\leq \frac{1}{2}\int_{0}^{t}\|M^{\psi_\varepsilon}(s)\|^2 \dif s+C\int_{0}^{t}|M^{\psi_\varepsilon}(s)|^2\|u^0(s)\|^2\dif s.
	\end{equation}
Recall that $\varphi_\varepsilon=(\psi_\varepsilon-1)/a(\varepsilon)$. By (H2), \eqref{esti X0}, and the elementary equality $2a\leq a^2+1$, we have
\begin{eqnarray}\label{Mq4}
	J_4&\leq& \int_{0}^{t}\int_{Z}\left(L_2(s,z)+L_3(s,z)|a(\varepsilon)M^{\psi_\varepsilon}(s)+u^0(s)|\right)|\varphi_\varepsilon(s,z)||M^{\psi_\varepsilon}(s)|\nu(\dif z)\dif s\nonumber\\
	&\leq& C\int_{0}^{t}\int_{Z}(L_2(s,z)+L_3(s,z))|\varphi_\varepsilon(s,z)||M^{\psi_\varepsilon}(s)|\nu(\dif z)\dif s \nonumber\\
	&&+\int_{0}^{t}\int_{Z}a(\varepsilon)L_3(s,z)|\varphi_\varepsilon(s,z)||M^{\psi_\varepsilon}(s)|^2\nu(\dif z)\dif s\nonumber\\
	&\leq & C\int_{0}^{t}\int_{Z}(L_2(s,z)+L_3(s,z))|\varphi_\varepsilon(s,z)|\nu(\dif z)\dif s \nonumber\\
	&&+C\int_{0}^{t}\int_{Z}\left(L_2(s,z)+L_3(s,z)\right)|\varphi_\varepsilon(s,z)||M^{\psi_\varepsilon}(s)|^2\nu(\dif z)\dif s.
\end{eqnarray}
Substituting \eqref{Mq1}--\eqref{Mq4} into \eqref{Mq0} and applying Gronwall's inequality, we obtain
\begin{eqnarray*}
	\frac{1}{2}|M^{\psi_\varepsilon}(t)|^2+\frac{1}{2}\int_0^t\| M^{\psi_\varepsilon}(s)\|^2\dif s\leq e^{CD_\varepsilon}(CD_\varepsilon+\sup_{t\in[0,T]}|J_2(t)+J_3(t)|),\ t\in[0,T],
\end{eqnarray*}
for some constant 
\begin{equation*}
	D_\varepsilon\leq|u_0|^2+\int_{0}^{T}\int_{Z}\left(L_2(s,z)+L_3(s,z)\right)|\varphi_\varepsilon(s,z)|\nu(\dif z)\dif s<\infty,
\end{equation*}
due to the fact that $L_2,L_3\in L^2(\nu_T)$ and \eqref{coef3}.
Then there exists a constant $\Lambda\geq 0$ such that
\begin{eqnarray}\label{Mq5}
	\mathbb{E}\left[\sup_{t\in[0,T]}|M^{\psi_\varepsilon}(t)|^2+\int_{0}^{T}\| M^{\psi_\varepsilon}(t)\|^2\dif t\right]\leq \Lambda\left(1+\mathbb{E}\left[\sup_{t\in[0,T]}\left|J_2(t)+J_3(t)\right|\right]\right),
\end{eqnarray}
By the Burkholder-Davis-Gundy inequality and (H2), $\mathbb{E}\left[\sup_{t\in[0,T]}|J_2(t)+J_3(t)|\right]$ is bounded by
\begin{eqnarray}\label{Mq23}
	&&\frac{\varepsilon C}{a(\varepsilon)}\mathbb{E}\left(\int_{0}^{T}\int_{Z}|G(s,a(\varepsilon) M^{\psi_\varepsilon}(s-)+u^0(s),z)|^2|M^{\psi_\varepsilon}(s-)|^2N^{\varepsilon^{-1}\psi_\varepsilon}(\dif z,\dif s)\right)^{\frac{1}{2}}\nonumber\\
	&&+\frac{\varepsilon C}{a^2(\varepsilon)}\mathbb{E}\left[\int_{0}^{T}\int_{Z}|G(s,a(\varepsilon) M^{\psi_\varepsilon}(s)+u^0(s),z)|^2\psi_\varepsilon(s,z)\nu(\dif z)\dif s\right]\nonumber\\
	&\leq & \frac{1}{4}\mathbb{E}\left[\sup_{s\in [0,T]}|M^{\psi_\varepsilon}(s)|^2\right]+\frac{\varepsilon C}{a^2(\varepsilon)}\mathbb{E}\left[\int_{0}^{T}\int_{Z}|G(s,a(\varepsilon) M^{\psi_\varepsilon}(s)+u^0(s),z)|^2\psi_\varepsilon(s,z)\nu(\dif z)\dif s\right]\nonumber\\
	&\leq & \frac{1}{4}\mathbb{E}\left[\sup_{s\in [0,T]}|M^{\psi_\varepsilon}(s)|^2\right]\nonumber\\
	&&+\frac{\varepsilon C}{a^2(\varepsilon)}\mathbb{E}\left[\int_{0}^{T}\int_{Z}\left(L_2^2(s,z)+L_3^2(s,z)|a(\varepsilon) M^{\psi_\varepsilon}(s)+u^0(s)|^2\right)\psi_\varepsilon(s,z)\nu(\dif z)\dif s\right]\nonumber\\
	&\leq & \frac{1}{4}\mathbb{E}\left[\sup_{s\in [0,T]}|M^{\psi_\varepsilon}(s)|^2\right]+\varepsilon C\sup_{\psi\in S^m_{+,\varepsilon}}\left(\int_{0}^{T}\int_{Z}L_3^2(s,z)\psi(s,z)\nu(\dif z)\dif s\right) ~\mathbb{E}\left[\sup_{s\in [0,T]}|M^{\psi_\varepsilon}(s)|^2\right]\nonumber\\
	&&+\frac{\varepsilon C}{a^2(\varepsilon)}\sup_{\psi\in S^m_{+,\varepsilon}}\left(\int_{0}^{T}\int_{Z}\left(L_2^2(s,z)+L_3^2(s,z)\right)\psi(s,z)\nu(\dif z)\dif s\right).
\end{eqnarray}
Combining \eqref{Mq23} with \eqref{Mq5}, we obtain
\begin{eqnarray*}
	&&\left(\frac{3}{4}-\varepsilon C\sup_{\psi\in S^m_{+,\varepsilon}}\int_{0}^{T}\int_{Z}L_3^2(s,z)\psi(s,z)\nu(\dif z)\dif s\right)\mathbb{E}\left[\sup_{s\in [0,T]}|M^{\psi_\varepsilon}(s)|^2\right]\nonumber\\
	&\leq & C\left(1+\sup_{\psi\in S^m_{+,\varepsilon}}\int_{0}^{T}\int_{Z}\left(L_2^2(s,z)+L_3^2(s,z)\right)\psi(s,z)\nu(\dif z)\dif s\right)
\end{eqnarray*}
In view of \eqref{coef1} and $\varepsilon/a^2(\varepsilon)\to0$, for all sufficiently small $\varepsilon$, say $\varepsilon\in(0,\varepsilon_0]$, we have
\begin{equation*}
	\sup_{\varepsilon\in (0,\varepsilon_0] }\mathbb{E} \left[\sup_{t\in[0,T]}|M^{\psi_\varepsilon}(t)|^2+\int_{0}^{T}\|M^{\psi_\varepsilon}(s)\|^2\dif s\right]<\infty.
\end{equation*}
\end{proof}

\begin{proposition}\label{MDP2}
	For any $m\in(0,\infty)$, let $\{\psi_\varepsilon\}_{\varepsilon>0}$ be such that for every $\varepsilon>0$,
	$\psi_\varepsilon\in \mathcal{S}^m_{+,\varepsilon}$, and
	for some $\beta\in(0,1]$, $\varphi_\varepsilon\mathbf{1}_{\{|\varphi_\varepsilon|\leq \beta/a(\varepsilon)\}}\in B_2(\sqrt{m\kappa_2(1)})$
	where $\varphi_\varepsilon=(\psi_\varepsilon-1)/a(\varepsilon)$. 
	Then  
\begin{align*}
\Gamma^\varepsilon(\varepsilon N^{\varepsilon^{-1}\psi_\varepsilon})-\Gamma^0(\varphi_\varepsilon\mathbf{1}_{\{|\varphi_\varepsilon|\leq \beta/a(\varepsilon)\}}) \to 0 \quad \text{ in probability} 
\end{align*}
with respect to the metric of $D([0,T];H)\cap L^2([0,T];V)$ as $\varepsilon\to 0$.

\end{proposition}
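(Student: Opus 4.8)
The plan is to estimate the difference $Z^\epsilon:=M^{\psi_\epsilon}-Y^{\varphi_\epsilon^\beta}$ directly and show it vanishes in $L^2(\Omega)$. Here $\varphi_\epsilon^\beta:=\varphi_\epsilon 1_{\{|\varphi_\epsilon|\le\beta/a(\epsilon)\}}$, so that $Y^{\varphi_\epsilon^\beta}:=\Upsilon^0(\varphi_\epsilon^\beta)$ is well defined by Proposition~\ref{Prop SE solution} (since $\|\varphi_\epsilon^\beta\|_2\le\sqrt{m\kappa_2(1)}$ a.s., by hypothesis), and $M^{\psi_\epsilon}=\Upsilon^\epsilon(\epsilon N^{\epsilon^{-1}\psi_\epsilon})$ solves Eq.~\eqref{eq MDP 1-second}. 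Because the Skorokhod metric on $D([0,T];H)$ is dominated by the uniform metric, it is enough to prove
$$\mathbb{E}\Big[\sup_{t\in[0,T]}|Z^\epsilon(t)|^2+\int_0^T\|Z^\epsilon(s)\|^2\,\dif s\Big]\longrightarrow 0\qquad\text{as }\epsilon\to 0.$$
Subtracting Eq.~\eqref{eq sk} with control $\varphi_\epsilon^\beta$ from Eq.~\eqref{eq MDP 1-second} and using the bilinearity of $B$, one finds $Z^\epsilon(0)=0$ and, as an identity in $V'$,
$$\dif Z^\epsilon+\cA Z^\epsilon\,\dif t+\big(B(Z^\epsilon,u^0)+B(u^0,Z^\epsilon)+a(\epsilon)B(M^{\psi_\epsilon},M^{\psi_\epsilon})\big)\,\dif t=\dif R_1^\epsilon+\dif R_2^\epsilon+\dif R_3^\epsilon,$$
where, for $t\in[0,T]$, $R_1^\epsilon(t)=\tfrac{\epsilon}{a(\epsilon)}\int_0^t\int_Z G(s,a(\epsilon)M^{\psi_\epsilon}(s-)+u^0(s-),z)\,\widetilde{N}^{\epsilon^{-1}\psi_\epsilon}(\dif z,\dif s)$ is a stochastic integral, $R_2^\epsilon(t)=\int_0^t\int_Z\big(G(s,a(\epsilon)M^{\psi_\epsilon}(s)+u^0(s),z)-G(s,u^0(s),z)\big)\varphi_\epsilon(s,z)\,\nu(\dif z)\,\dif s$ is a Lipschitz-perturbation drift, and $R_3^\epsilon(t)=\int_0^t\int_Z G(s,u^0(s),z)\,\varphi_\epsilon(s,z)\,1_{\{|\varphi_\epsilon|>\beta/a(\epsilon)\}}\,\nu(\dif z)\,\dif s$ is the ``tail'' drift.

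Next I would apply It\^{o}'s formula to $|Z^\epsilon(t)|^2$ in the Gelfand triple $V\subset H\subset V'$, which produces $2\int_0^t\|Z^\epsilon(s)\|^2\dif s$ on the left. Among the bilinear terms, $\langle B(u^0,Z^\epsilon),Z^\epsilon\rangle$ vanishes by \eqref{eq PB}, while $|\langle B(Z^\epsilon,u^0),Z^\epsilon\rangle|\le\tfrac14\|Z^\epsilon\|^2+C\|u^0\|^2|Z^\epsilon|^2$ by \eqref{p-5}, the interpolation \eqref{p-4} and Young's inequality, with $\|u^0\|^2\in L^1([0,T])$ by \eqref{esti X0}. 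For the cubic term the crucial step is to expand $M^{\psi_\epsilon}=Z^\epsilon+Y^{\varphi_\epsilon^\beta}$: by \eqref{eq PB} the contributions $\langle B(Z^\epsilon,Z^\epsilon),Z^\epsilon\rangle$ and $\langle B(Y^{\varphi_\epsilon^\beta},Z^\epsilon),Z^\epsilon\rangle$ vanish, so only $a(\epsilon)\langle B(Z^\epsilon,Y^{\varphi_\epsilon^\beta}),Z^\epsilon\rangle+a(\epsilon)\langle B(Y^{\varphi_\epsilon^\beta},Y^{\varphi_\epsilon^\beta}),Z^\epsilon\rangle$ remains, and via \eqref{p-5}, \eqref{p-4}, Young's inequality and the a priori bound on $Y^{\varphi_\epsilon^\beta}$ from Proposition~\ref{Prop SE solution}, this is dominated, after absorbing a further multiple of $\|Z^\epsilon\|^2$, by a Gronwall term with weight $C(a(\epsilon)^2+a(\epsilon)^{4/3})\|Y^{\varphi_\epsilon^\beta}(s)\|^2$ plus a deterministic remainder of size $O(a(\epsilon)^{4/3})$. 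It then remains to verify that the error drivers are asymptotically negligible: for the contribution of $R_2^\epsilon$ one uses (H1-G), $|M^{\psi_\epsilon}|\le|Z^\epsilon|+|Y^{\varphi_\epsilon^\beta}|$ and $2|Z^\epsilon|\,|M^{\psi_\epsilon}|\le|Z^\epsilon|^2+|M^{\psi_\epsilon}|^2$ to split it into a Gronwall term with weight $a(\epsilon)\int_Z L_1(s,z)|\varphi_\epsilon(s,z)|\nu(\dif z)$, whose $\dif s$-integral is bounded and of order $a(\epsilon)$ by \eqref{coef3}, plus a remainder whose expectation is, again by \eqref{coef3} and Lemma~\ref{estiMq}, at most $C a(\epsilon)\,\mathbb{E}[\sup_t|M^{\psi_\epsilon}(t)|^2]\to0$; for the contribution of $R_3^\epsilon$ one uses (H2-G) and $\sup_t|u^0(t)|\le|u_0|$, so that $|G(s,u^0,z)|\le L_2(s,z)+CL_3(s,z)$, together with the elementary inequality $2ab\le a^2b+b$, to split it into a Gronwall term with weight $\int_Z(L_2+CL_3)|\varphi_\epsilon|1_{\{|\varphi_\epsilon|>\beta/a(\epsilon)\}}\nu(\dif z)$ plus a deterministic remainder, both bounded by $\sup_{\varphi\in S^m_\epsilon}\int_{Z\times[0,T]}(L_1+L_2+L_3)|\varphi|1_{\{|\varphi|\ge\beta/a(\epsilon)\}}\,\nu(\dif z)\dif s$, which tends to $0$ by \eqref{coef2}; the jump correction $\tfrac{\epsilon^2}{a^2(\epsilon)}\int_0^t\int_Z|G(s,a(\epsilon)M^{\psi_\epsilon}(s-)+u^0(s-),z)|^2 N^{\epsilon^{-1}\psi_\epsilon}(\dif z,\dif s)$, after taking expectations and passing to the compensator, is by (H2-G), Lemma~\ref{estiMq} and \eqref{coef1} of order $O(\epsilon/a^2(\epsilon))$, hence vanishes by \eqref{scale}; and the martingale term $2\int_0^t\langle Z^\epsilon(s-),\dif R_1^\epsilon(s)\rangle$ has, by the Burkholder--Davis--Gundy inequality, the Cauchy--Schwarz inequality, Lemma~\ref{estiMq} and \eqref{coef1}, expected supremum at most $C\sqrt{\epsilon/a^2(\epsilon)}\to0$.

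Collecting these estimates gives a pathwise inequality $|Z^\epsilon(t)|^2+c\int_0^t\|Z^\epsilon(s)\|^2\dif s\le\int_0^t h_\epsilon(s)|Z^\epsilon(s)|^2\dif s+A_\epsilon(t)$ for all $t\in[0,T]$, where $c>0$, the (random) weight $h_\epsilon$ satisfies $\int_0^T h_\epsilon(s)\dif s\le H_\epsilon$ a.s. with $H_\epsilon$ a deterministic constant bounded uniformly for small $\epsilon$ (using \eqref{esti X0}, Proposition~\ref{Prop SE solution}, \eqref{coef2}, \eqref{coef3}, and the fact that $\varphi_\epsilon(\cdot,\cdot,\omega)\in S^m_\epsilon$ a.s.), and the remainder satisfies $\mathbb{E}\sup_{t\le T}|A_\epsilon(t)|\to0$. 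Gronwall's inequality then yields $\sup_{t\le T}|Z^\epsilon(t)|^2\le e^{H_\epsilon}\sup_{t\le T}|A_\epsilon(t)|$, whence $\mathbb{E}\sup_{t\le T}|Z^\epsilon(t)|^2\to0$; substituting this back and using $\int_0^T h_\epsilon(s)|Z^\epsilon(s)|^2\dif s\le H_\epsilon\sup_{t\le T}|Z^\epsilon(t)|^2$ gives $\mathbb{E}\int_0^T\|Z^\epsilon(s)\|^2\dif s\to0$, which finishes the proof.

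The main obstacle is the cubic term $a(\epsilon)B(M^{\psi_\epsilon},M^{\psi_\epsilon})$: without a compact embedding in the Gelfand triple one has only an $L^2(\Omega;L^2([0,T];V))$ control on $M^{\psi_\epsilon}$ and no pathwise bound on $\|M^{\psi_\epsilon}(t)\|$, so the bilinear estimates must be combined with the splitting $M^{\psi_\epsilon}=Z^\epsilon+Y^{\varphi_\epsilon^\beta}$ and the cancellation \eqref{eq PB} in order to route the dangerous part into the Gronwall weight rather than leaving a genuine nonlinearity; the moderate-deviation scaling \eqref{scale} is precisely what makes the $a(\epsilon)$-weighted bilinear remainders and the $(\epsilon/a^2(\epsilon))$-weighted jump and martingale terms negligible in the limit. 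A secondary point requiring care is that the Gronwall weight $h_\epsilon$ is itself random, so one must establish a deterministic integrable bound for it; this rests on the a priori estimates of Proposition~\ref{Prop SE solution} and Lemma~\ref{estiMq} together with the control estimates \eqref{coef2} and \eqref{coef3}.
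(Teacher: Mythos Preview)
Your argument is correct and complete, but it follows a genuinely different route from the paper.

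The paper localizes via stopping times
\[
\tau_n^\epsilon=\inf\Big\{t\ge 0:\ |M^{\psi_\epsilon}(t)|^2+\int_0^t\|M^{\psi_\epsilon}(s)\|^2\,\dif s\ge n\Big\}\wedge T,
\]
uses the resulting pathwise bounds $|M^{\psi_\epsilon}|\le n^{1/2}$ and $\int_0^{\tau_n^\epsilon}\|M^{\psi_\epsilon}\|^2\le n$ to estimate the cubic term $a(\epsilon)\langle B(M^{\psi_\epsilon},M^{\psi_\epsilon}),Z^\epsilon\rangle$ directly (without decomposing $M^{\psi_\epsilon}$), proves $\mathbb{E}\big[\sup_{t}|Z^\epsilon(t\wedge\tau_n^\epsilon)|^2+\int_0^{\tau_n^\epsilon}\|Z^\epsilon\|^2\big]\to 0$, and then combines this with the Chebyshev estimate $P(\tau_n^\epsilon<T)\le C/n$ (from Lemma~\ref{estiMq}) to conclude convergence in probability only. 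By contrast, you dispense with stopping times entirely: in the cubic term you write $M^{\psi_\epsilon}=Z^\epsilon+Y^{\varphi_\epsilon^\beta}$ and exploit the cancellations $\langle B(Z^\epsilon,Z^\epsilon),Z^\epsilon\rangle=\langle B(Y,Z^\epsilon),Z^\epsilon\rangle=0$ so that only terms controllable by the deterministic bounds on $Y^{\varphi_\epsilon^\beta}$ survive; and in the martingale estimate you use Cauchy--Schwarz together with the \emph{a priori} finiteness of $\mathbb{E}\sup_t|Z^\epsilon|^2$ (from Lemma~\ref{estiMq} and Proposition~\ref{Prop SE solution}) to obtain the clean bound $C\sqrt{\epsilon/a^2(\epsilon)}$ without any absorption step. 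Your route is slightly more elegant and actually delivers the stronger conclusion $\mathbb{E}\big[\sup_t|Z^\epsilon(t)|^2+\int_0^T\|Z^\epsilon\|^2\big]\to 0$; the paper's stopping-time approach, on the other hand, is more robust in situations where only weaker moment information on $M^{\psi_\epsilon}$ (e.g.\ $\sup_t\mathbb{E}|M^{\psi_\epsilon}(t)|^2$ rather than $\mathbb{E}\sup_t|M^{\psi_\epsilon}(t)|^2$) is available.
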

\begin{proof}
For brevity, write $\widetilde{Y}^\varepsilon$ for $Y^{\varphi_\varepsilon\mathbf{1}_{\{|\varphi_\varepsilon|\leq \beta/a(\varepsilon)\}}}$. Let $Z^\varepsilon(t):=M^{\psi_\varepsilon}(t)-\widetilde{Y}^\varepsilon(t) $, $t\in [0,T]$. 
To verify the condition \textbf{(C2)}, it suffices to prove that $Z^\varepsilon\rightarrow 0$ in probability with respect to the metric of $D([0,T];H)\cap L^2([0,T];V)$.
	For each $n\in\mathbb{N}$  and  $\varepsilon>0$, define the stopping time
	\begin{equation*}
		\tau_n^\varepsilon:=\inf\left\{t\geq 0: |M^{\psi_\varepsilon}(t)|^2+\int_{0}^{t}\|M^{\psi_\varepsilon}(s)\|^2\dif s\geq n\right\}\wedge T.
	\end{equation*} 
By Lemma \ref{estiMq} and Chebyshev's inequality, we have
\begin{equation*}
	\sup_{\varepsilon\in(0,\varepsilon_0]} \mathbb{P}(\tau_n^\varepsilon <T)\leq \sup_{\varepsilon\in(0,\varepsilon_0]} \mathbb{E}\left[\sup_{t\in[0,T]}|M^{\psi_\varepsilon}(t)|^2+\int_{0}^{T}\|M^{\psi_\varepsilon}(s)\|^2\dif s\right]\Big/n \leq C/n.
\end{equation*}

By It\^o's formula, we have
\begin{eqnarray}\label{Q0}
	&&\frac{1}{2}|Z^\varepsilon(t\wedge \tau_n^\varepsilon)|^2+\int_{0}^{t\wedge\tau_n^\varepsilon}\|Z^\varepsilon(s)\|^2\dif s\nonumber\\
	&=& -\int_{0}^{t\wedge \tau_n^\varepsilon} \left<\frac{B(a(\varepsilon)M^{\psi_\varepsilon}(s)+u^0(s),a(\varepsilon)M^{\psi_\varepsilon}(s)+u^0(s))-B(u^0(s),u^0(s))}{a(\varepsilon)}\right.\nonumber\\
	&&\left.-B(\widetilde{Y}^\varepsilon(s),u^0(s))-B(u^0(s),\widetilde{Y}^\varepsilon(s)),Z^\varepsilon(s)\right>\dif s\nonumber\\
	&&+\frac{\varepsilon}{a(\varepsilon)}\int_{0}^{t\wedge\tau_n^\varepsilon}\int_{Z}\left<G(s,a(\varepsilon) M^{\psi_\varepsilon}(s)+u^0(s),z),Z^\varepsilon(s)\right>\widetilde{N}^{\varepsilon^{-1}\psi_\varepsilon}(\dif z,\dif s)\nonumber\\
	&&+\frac{\varepsilon^2}{2a^2(\varepsilon)}\int_{0}^{t\wedge\tau_n^\varepsilon}\int_{Z}|G(s,a(\varepsilon) M^{\psi_\varepsilon}(s)+u^0(s),z)|^2N^{\varepsilon^{-1}\psi_\varepsilon}(\dif z,\dif s)\nonumber\\
	&&+\int_{0}^{t\wedge\tau_n^\varepsilon}\int_{Z}\left<G(s,a(\varepsilon) M^{\psi_\varepsilon}(s)+u^0(s),z)\varphi_\varepsilon(s,z)\right.\nonumber\\
	&&\quad\quad\quad\quad\quad\left.-G(s,u^0(s),z)\varphi_\varepsilon(s,z) 1_{\{|\varphi_\varepsilon|\leq \beta/a(\varepsilon)\}},Z^\varepsilon(s)\right>\nu(\dif z)\dif s\nonumber\\
	&=&: Q_1+Q_2+Q_3+Q_4.
\end{eqnarray}
Since $\varphi_\varepsilon\mathbf{1}_{\{|\varphi_\varepsilon|\leq \beta/a(\varepsilon)\}}\in B_2(\sqrt{m\kappa_2(1)})$, Proposition \ref{thm solution} (iii) implies that there exists $\Omega_0\in \mathcal{F}$ with $P(\Omega_0)=1$ such that 
\begin{equation*}
\vartheta=\sup_{\varepsilon\in (0,\varepsilon_0] }\sup_{\omega\in\Omega_0,t\in[0,T]}|\widetilde{Y}^\varepsilon(t)(\omega)|<\infty,
\end{equation*}
where the constant $\vartheta$ depends on $m$.

By \eqref{eq PB}, \eqref{p-5} and \eqref{p-4}, using Young's inequality and the definition of $\tau_n^\varepsilon$ , we have
\begin{eqnarray}\label{Q1}
	Q_1&=& -\int_{0}^{t\wedge \tau_n^{\varepsilon}}\left<a(\varepsilon)B(M^{\psi_\varepsilon}(s),M^{\psi_\varepsilon}(s))+B(Z^\varepsilon(s),u^0(s)),Z^\varepsilon(s)\right>\dif s\nonumber\\
	&\leq & C\int_{0}^{t\wedge \tau_n^{\varepsilon}}\left(a(\varepsilon)|M^{\psi_\varepsilon}(s)|^{\frac{1}{2}}\|M^{\psi_\varepsilon}(s)\|^{\frac{3}{2}}|Z^\varepsilon(s)|^{\frac{1}{2}} \|Z^\varepsilon(s)\|^{\frac{1}{2}} +|Z^\varepsilon(s)|\|Z^\varepsilon(s)\|\|u^0(s)\|\right)\dif s\nonumber\\
	&\leq & a(\varepsilon)Cn^{\frac{1}{4}}(n^{\frac{1}{4}}+\vartheta^{\frac{1}{2}})\int_{0}^{t\wedge\tau_n^\varepsilon}\|M^{\psi_\varepsilon}(s)\|^{\frac{3}{2}}\|Z^\varepsilon(s)\|^{\frac{1}{2}}\dif s+\int_{0}^{t\wedge\tau_n^\varepsilon}|Z^\varepsilon(s)|\|Z^\varepsilon(s)\|\|u^0(s)\|\dif s\nonumber\\
	&\leq & \frac{1}{2}\int_{0}^{t\wedge\tau_n^\varepsilon} \|Z^\varepsilon(s)\|^2\dif s+a(\varepsilon)Cn^{\frac{1}{2}}(n^{\frac{1}{4}}+\vartheta^{\frac{1}{2}})\int_{0}^{t\wedge\tau_n^\varepsilon}\|M^{\psi_\varepsilon}(s)\|^2\dif s\nonumber\\
	&&+C\int_{0}^{t\wedge\tau_n^\varepsilon}|Z^\varepsilon(s)|^2\|u^0(s)\|^2\dif s\nonumber\\
	&\leq & \frac{1}{2}\int_{0}^{t\wedge\tau_n^\varepsilon} \|Z^\varepsilon(s)\|^2\dif s+a(\varepsilon)Cn^{\frac{5}{4}}(n^{\frac{1}{4}}+\vartheta^{\frac{1}{2}})+C\int_{0}^{t\wedge\tau_n^\varepsilon}|Z^\varepsilon(s)|^2\|u^0(s)\|^2\dif s.
\end{eqnarray}
By (H1) and (H2) we have
\begin{eqnarray}\label{Q4}
	Q_4&\leq &	\int_{0}^{t\wedge\tau_n^\varepsilon}\int_{Z}|G(s,a(\varepsilon)M^{\psi_\varepsilon}(s)+u^0(s),z)-G(s,u^0(s),z)||\varphi_\varepsilon(s,z)||Z^\varepsilon(s)|\nu(\dif z)\dif s\nonumber\\
	&&+\int_{0}^{t\wedge\tau_n^\varepsilon}\int_{Z} |G(s,u^0(s),z)| |\varphi_\varepsilon(s,z)|\mathbf{1}_{\{|\varphi_\varepsilon|> \beta/a(\varepsilon)\}}|Z^\varepsilon(s)|	\nu(\dif z)\dif s\nonumber\\
	&\leq & a(\varepsilon)n^{\frac{1}{2}}(n^{\frac{1}{2}}+\vartheta)\int_{0}^{t\wedge\tau_n^\varepsilon}\int_{Z}L_1(s,z)|\varphi_\varepsilon(s,z)|\nu(\dif z)\dif s\nonumber\\
	&&+C(n^{\frac{1}{2}}+\vartheta)\int_{0}^{t\wedge\tau_n^\varepsilon}\int_{Z} \left(L_2(s,z)+L_3(s,z)\right)|\varphi_\varepsilon(s,z)|\mathbf{1}_{\{|\varphi_\varepsilon|> \beta/a(\varepsilon)\}}\nu(\dif z)\dif s.
\end{eqnarray}
Substituting \eqref{Q1} and \eqref{Q4} into \eqref{Q0}, applying Gronwall's inequality, using \eqref{esti X0} and then taking expectations, we obtain
\begin{equation}\label{Q5}
	\mathbb{E}\sup_{t\in[0,T]}\left[|Z^\varepsilon(t\wedge \tau_n^\varepsilon)|^2+\int_{0}^{t\wedge\tau_n^\varepsilon}\|Z^\varepsilon(s)\|^2\dif s\right]
	\leq  \Lambda\left(\rho_\varepsilon+\mathbb{E}\sup_{t\in[0,T]}\left(|Q_2(t)|+|Q_3(t)|\right)\right),
\end{equation}
for some constant $\Lambda>0$, where 
\begin{align}\label{rho} \rho_\varepsilon=& \  a(\varepsilon)n^{\frac{5}{4}}(n^{\frac{1}{4}}+\vartheta^{\frac{1}{2}})+a(\varepsilon)n^{\frac{1}{2}}(n^{\frac{1}{2}}+\vartheta)\sup_{\varphi\in S^m_\varepsilon} \int_{0}^{T}\int_{Z}L_1(s,z)|\varphi(s,z)|\nu(\dif z)\dif s\nonumber\\
	&+(n^{\frac{1}{2}}+\vartheta)\sup_{\varphi\in S^m_\varepsilon} \int_{0}^{T}\int_{Z} \left(L_2(s,z)+L_3(s,z)\right)|\varphi(s,z)|\mathbf{1}_{\{|\varphi_\varepsilon|> \beta/a(\varepsilon)\}}	\nu(\dif z)\dif s.
\end{align}
Similar to \eqref{Mq23}, we have
\begin{eqnarray}\label{Q23}
	&&\Lambda\,\mathbb{E}\left[\sup_{t\in[0,T]}\left(|Q_2(t)|+|Q_3(t)|\right)\right]\nonumber\\
	&\leq & \frac{1}{2}\mathbb{E}\left[\sup_{t\in [0,T]}|Z^\varepsilon(t\wedge\tau_n^\varepsilon)|^2\right]+\varepsilon C\sup_{\psi\in S^m_{+,\varepsilon}}\left(\int_{0}^{T}\int_{Z}L_3^2(s,z)\psi(s,z)\nu(\dif z)\dif s\right) ~\mathbb{E}\left[\sup_{t\in [0,T]}|Z^\varepsilon(t\wedge\tau_n^\varepsilon)|^2\right]\nonumber\\
	&&+\frac{\varepsilon C}{a^2(\varepsilon)}\sup_{\psi\in S^m_{+,\varepsilon}}\left(\int_{0}^{T}\int_{Z}\left(L_2^2(s,z)+L_3^2(s,z)\right)\psi(s,z)\nu(\dif z)\dif s\right).
\end{eqnarray}
Combining \eqref{Q5} and \eqref{Q23} yields
\begin{eqnarray*}
	&&\mathbb{E}\left[\sup_{t\in[0,T]}\left(\left(\frac{1}{2}-\varepsilon C\sup_{\psi\in S^m_{+,\varepsilon}}\int_{0}^{T}\int_{Z}L_3^2(s,z)\psi(s,z)\nu(\dif z)\dif s \right)|Z^\varepsilon(t\wedge \tau_n^\varepsilon)|^2+\int_{0}^{t\wedge\tau_n^\varepsilon}\|Z^\varepsilon(s)\|^2\dif s\right)\right]\nonumber\\
	&\leq & \Lambda\rho_\varepsilon+\frac{\varepsilon C}{a^2(\varepsilon)}\sup_{\psi\in S^m_{+,\varepsilon}}\left(\int_{0}^{T}\int_{Z}\left(L_2^2(s,z)+L_3^2(s,z)\right)\psi(s,z)\nu(\dif z)\dif s\right),
\end{eqnarray*}
In view of \eqref{coef2}-\eqref{coef1} and \eqref{rho}, we obtain that
\begin{equation*}
 \lim_{\varepsilon\to 0}\mathbb{E}\left[\sup_{t\in[0,T]}\left(|Z^\varepsilon(t\wedge \tau_n^\varepsilon)|^2+\int_{0}^{t\wedge\tau_n^\varepsilon}\|Z^\varepsilon(s)\|^2\dif s\right)\right]=0.
\end{equation*}
For any $\delta>0$,
\begin{eqnarray*}
    &&\mathbb{P}\left(\sup_{t\in[0,T]}|Z^\varepsilon(t)|^2+\int_{0}^{T}\|Z^\varepsilon(s)\|^2\dif s>\delta\right)\nonumber\\
    &\leq & 
    \mathbb{P}\left(\sup_{t\in[0,T]}|Z^\varepsilon(t)|^2+\int_{0}^{T}\|Z^\varepsilon(s)\|^2\dif s>\delta, \tau_n^\varepsilon\geq T\right)+\mathbb{P}\left(\tau_n^\varepsilon< T\right)\nonumber\\
    &\leq & 
    \mathbb{P}\left(\sup_{t\in[0,T]}|Z^\varepsilon(t\wedge \tau_n^\varepsilon)|^2+\int_{0}^{t}\|Z^\varepsilon(s\wedge \tau_n^\varepsilon)\|^2\dif s>\delta, \tau_n^\varepsilon\geq T\right)+\mathbb{P}\left(\tau_n^\varepsilon< T\right)\nonumber\\
    &\leq &  \mathbb{E}\left[\sup_{t\in[0,T]}\left(|Z^\varepsilon(t\wedge \tau_n^\varepsilon)|^2+\int_{0}^{t\wedge\tau_n^\varepsilon}\|Z^\varepsilon(s)\|^2\dif s\right)\right]\big/\delta+C/n.
\end{eqnarray*}
Letting $\varepsilon\to 0$ and then $n\to \infty$, we obtain that
\begin{equation*}
	\lim_{\varepsilon\to 0}\mathbb{P}\left(\sup_{t\in[0,T]}|Z^\varepsilon(t)|^2+\int_{0}^{T}\|Z^\varepsilon(s)\|^2\dif s>\delta\right)=0.
\end{equation*}
The proof of Proposition \ref{MDP2} is completed.
\end{proof}


\begin{thebibliography}{10}

\bibitem{BHZ13}
Z.~Brze\'zniak, E.~Hausenblas, and J.~Zhu.
\newblock 2{D} stochastic {Navier-Stokes} equations driven by jump noise.
\newblock {\em Nonlinear Anal.}, 79:122--139, 2013.

\bibitem{BPZ23}
Z.~Brze\'zniak, X.~Peng, and J.~Zhai.
\newblock Well-posedness and large deviations for 2{D} stochastic
  {Navier-Stokes} equations with jumps.
\newblock {\em J. Eur. Math. Soc.}, 25(8):3093--3176, 2023.

\bibitem{BD19}
A.~Budhiraja and P.~Dupuis.
\newblock {\em Analysis and Approximation of Rare Events: Representations and
  Weak Convergence Methods}, volume~94 of {\em Probab. Theory Stoch. Model.}
\newblock Springer, New York, 2019.
\newblock xix+574 pp.

\bibitem{BDG16}
A.~Budhiraja, P.~Dupuis, and A.~Ganguly.
\newblock Moderate deviation principles for stochastic differential equations
  with jumps.
\newblock {\em Ann. Probab.}, 44(3):1723--1775, 2016.

\bibitem{CM10}
I.~Chueshov and A.~Millet.
\newblock Stochastic 2{D} hydrodynamical type systems: well posedness and large
  deviations.
\newblock {\em Appl. Math. Optim.}, 61(3):379--420, 2010.

\bibitem{DS23}
N.~T. Da and L.~She.
\newblock Large deviation principle for a class of stochastic hydrodynamical
  type systems driven by multiplicative {L\'evy} noises.
\newblock {\em Stoch. Anal. Appl.}, 41(6):1260--1299, 2023.

\bibitem{DZ98}
A.~Dembo and O.~Zeitouni.
\newblock {\em Large Deviations Techniques and Applications}, volume~38 of {\em
  Appl. Math. (N. Y.)}.
\newblock Springer-Verlag, New York, 1998.
\newblock xvi+396 pp.

\bibitem{DX09}
Z.~Dong and Y.~Xie.
\newblock Global solutions of stochastic 2{D} {Navier-Stokes} equations with
  {L\'evy} noise.
\newblock {\em Sci. China Ser. A}, 52(7):1497--1524, 2009.

\bibitem{DXZZ17}
Z.~Dong, J.~Xiong, J.~Zhai, and T.~Zhang.
\newblock A moderate deviation principle for 2-{D} stochastic {Navier-Stokes}
  equations driven by multiplicative {L\'evy} noises.
\newblock {\em J. Funct. Anal.}, 272(1):227--254, 2017.

\bibitem{LSZZ23}
W.~Liu, Y.~Song, J.~Zhai, and T.~Zhang.
\newblock Large and moderate deviation principles for {McKean-Vlasov} {SDEs}
  with jumps.
\newblock {\em Potential Anal.}, 59(3):1141--1190, 2023.

\bibitem{PYZ22}
X.~Peng, J.~Yang, and J.~Zhai.
\newblock Well-posedness of stochastic 2{D} hydrodynamics type systems with
  multiplicative {L\'evy} noises.
\newblock {\em Electron. J. Probab.}, 27(55):31 pp., 2022.

\bibitem{Te79}
R.~Temam.
\newblock {\em Navier-Stokes Equations}, volume~2 of {\em Stud. Math. Appl.}
\newblock North-Holland Publishing Co., Amsterdam-New York, 1979.
\newblock x+519 pp.

\bibitem{WZ24}
W.~Wu and J.~Zhai.
\newblock Large deviations for stochastic generalized porous media equations
  driven by {L\'evy} noise.
\newblock {\em SIAM J. Math. Anal.}, 56(1):1--42, 2024.

\bibitem{WZZ25}
W.~Wu, J.~Zhai, and J.~Zhu.
\newblock Large deviations for locally monotone stochastic partial differential
  equations driven by {L\'evy} noise.
\newblock {\em J. Theoret. Probab.}, 38(4):Paper No. 69, 33, 2025.

\bibitem{ZZ15}
J.~Zhai and T.~Zhang.
\newblock Large deviations for 2-{D} stochastic {Navier-Stokes} equations
  driven by multiplicative {L\'evy} noises.
\newblock {\em Bernoulli}, 21(4):2351--2392, 2015.

\bibitem{Z14}
H.~Zhao.
\newblock Yamada--Watanabe theorem for stochastic evolution equation driven by
  {P}oisson random measure.
\newblock {\em ISRN Probab. Statist.}, pages Art. 982190, 7 pp., 2014.

\bibitem{ZLZ26}
J.~Zhu, W.~Liu, and J.~Zhai.
\newblock Large deviation principles for stochastic nonlinear {Schr\"odinger}
  equations driven by {L\'evy} noise.
\newblock {\em J. Funct. Anal.}, 290(9):Paper No. 111377, 59, 2026.

\end{thebibliography}
\end{document}